\newcommand{\R}{\mathbb R}
\newcommand{\N}{\mathbb N}
\newcommand{\Z}{\mathbb Z}
\newcommand{\union}{\cup}
\newcommand{\intersection}{\cap}
\newcommand{\cisom}[1]{[#1]_{\tiny{\mathrm{isom}}}}
\newcommand{\Laff}{L_{\text{aff}}}
\newcommand{\vx}{\underline x}
\newcommand{\vy}{\underline y}
\newtheorem{thm}{Theorem}[section]
\newtheorem{pro}[thm]{Proposition}
\newtheorem{cor}[thm]{Corollary}
\newtheorem{lem}[thm]{Lemma}
\theoremstyle{definition}
\newtheorem{rem}[thm]{Remark}       %%%%% the counter [thm] is optional
\newtheorem{defn}[thm]{Definition}  %%%%% here and below, but preferred
\newtheorem{exam}[thm]{Example}     %%%%% by the journal
\begin{document}

\title[Inverse limit for a combinatorial tiling]
 {Continuous hull of a combinatorial pentagonal tiling as an inverse limit}

\author{Maria Ramirez-Solano}
\thanks{}

\address{Department of Mathematics, University of Copenhagen, Universitetsparken 5,
2100 K\o benhavn \O ,
 Denmark.}

\email{mrs@math.ku.dk}

\keywords{}

\subjclass{}

\thanks{Supported by the Danish National Research Foundation through the Centre
for Symmetry and Deformation (DNRF92), and by the Faculty of Science of the University of Copenhagen.}

\begin{abstract}
In \cite{MRScontinuoushull} we constructed a compact topological space for the combinatorics of "A regular pentagonal tiling of the plane", which we call the continuous hull. We also constructed a substitution map on the space which turns out to be a homeomorphism, and so the pair given by the continuous hull and the substitution map yields a dynamical system. In this paper we show how we can write this dynamical system as another  dynamical system given by an inverse limit and a right shift map.
\end{abstract}

\maketitle

For an aperiodic FLC Euclidean substitution tiling of the plane, there is a recipe for writing its continuous hull as an inverse limit. See for instance \cite{PutnamBible95}, \cite{Sadun08}. Such recipe consists of the following two steps:
(1) If necessary, write the substitution map as one that "forces its border" with a so-called collared substitution.
(2) Construct an equivalence relation on the continuous hull so that the continuous hull modulo this equivalence relation is a finite CW-complex, which is also known as the Anderson-Putnam finite CW-complex.
The inverse system consists of repeatedly using the same object, namely the  Anderson-Putnam finite CW-complex, and a morphism, namely a continuous surjective map induced by the quotient and substitution map.\\

In \cite{MRScontinuoushull} we defined the continuous hull  $\Omega$ for the combinatorics $K$ of "A regular pentagonal tiling of the plane" as
\begin{center}
    \begin{tabular}{   @{}r@{}  @{}p{10cm}@{} }
   $\Omega:=\{\cisom{L,x}\mid $ & $\, L$  is a combinatorial tiling locally isomorphic to $K$ and point  $x\in \Laff\},$
    \end{tabular}
\end{center}
where the isomorphism class is cell-preserving, decoration preserving,   isometric on each cell, and preserves the origin $x$.
We also equipped it with the following metric:
\begin{defn}[metric $d$ on $\Omega$]\index{metric $d$ on $\Omega$}
Define $d:\Omega\times\Omega\to \R_+$ by
$$d(\cisom{L,x},\cisom{L',x'}):=\min(\tfrac{1}{\sqrt2},\inf\Lambda),$$
where  $\Lambda\subset \R_+$, and $\varepsilon\in\Lambda$  if there exists maps
\begin{eqnarray*}
&& \phi:B(x,1/\varepsilon,L)\to L',\,\,\,\qquad  d_{L'}(\phi(x),x')\le \varepsilon\\
&& \phi':B(x',1/\varepsilon,L')\to L,\qquad d_{L}(\phi'(x'),x)\le \varepsilon
\end{eqnarray*}
 which are  cell-preserving maps, are isometries, preserve the decorations and degree of the vertices.
\end{defn}
We also defined a substitution map $\omega:\Omega\to\Omega$ via the decorated subdivison rule shown in Figure \ref{f:subdivisionmapnewdecorated}.
In the same article \cite{MRScontinuoushull} we showed that the metric space $(\Omega,d)$ is compact and that $\omega$ is a homeomorphism.\\

We follow the above two steps as guidance to the construction of the inverse limit.
The first difficulty is to find all the "collared tiles" for the collared substitution. We use the first section to address this problem, and we find  that there are 36 collared tiles, 45 collared edges, and 10 collared vertices (Theorem \ref{t:collaredtilesedgesvertices}).
The second difficulty is to define the equivalence relation to construct the finite CW-complex, and to show that everything is as it should be.
We use the the last two sections on this. The second main result is Theorem \ref{t:topologicalconjugateOmegaOmega1}, whose proof is adapted from \cite{PutnamBible95}, that shows that the dynamical system ($\Omega$,$\omega$) can be written as a dynamical system given by an inverse limit and a right shift map.
\begin{figure}
  % Requires \usepackage{graphicx}
  \centering
  \includegraphics[scale=1]{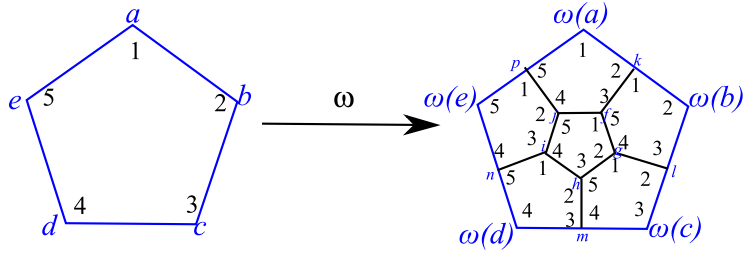}\\
  \caption{Subdivision map. }\label{f:subdivisionmapnewdecorated}
\end{figure}

\section{Collared substitution $\omega$ on $\Omega$}\label{s:collaredsubstitution}
In \cite{MRScontinuoushull} we constructed a substitution map $\omega:\Omega\to\Omega$ via the subdivision rule shown in Figure \ref{f:subdivisionmapnewdecorated}. Ignoring decoration and degree of the vertices of each pentagon, we have only one prototile, namely a pentagon.
If we ignore decoration, but include the degree of the vertices in the definition of prototiles, then we have 3 prototiles (See Figure 7 in \cite{MRSdiscretehull}).
If we include decoration (inside the pentagon) and degree of vertices then we have 11 prototiles (See Figure 22 in \cite{MRSdiscretehull}).
Unfortunately this substitution map does not "force its border", a term which will be defined below, and the fix is to construct a "collared substitution", as explained below. We borrow these two terms from \cite{Sadun08}. We remark that if we include the decoration (inside and outside of the pentagon) and degree of vertices in the definition of prototiles then we would have 36 prototiles according to Theorem \ref{t:collaredtilesedgesvertices}, something that would have spared us from introducing the term "collared substitution" but nothing more.

\begin{defn}[forcing its border]
  A substitution $\omega$ with prototiles $t_1,\ldots,$ $t_n$ is  said to \emph{force its border} if there is a $k\in\N$ such that
  every two  level-$k$ supertiles $\omega^k(t)$, $\omega^k(t')$ of same type (i.e. $t$ and $t'$ are copies of some prototile $t_i$) have same pattern of neighboring tiles.
\end{defn}

Our substitution map $\omega$ does not force its border because for instance, the two supertiles $w^2(t)$, $w^2(t')$  (of type $t_{2,5}$ as seen in Figure 22 in \cite{MRSdiscretehull}) shown in green and blue colors in Figure \ref{f:wdoesnotforceitsborder}
have a different pattern of neighbors as they do not agree on the yellow ribbon.  No matter how many times we substitute $t$ and $t'$ we will never have them to agree for the decoration on the yellow ribbons replicate themselves as suggested in Figure \ref{f:Knfb}.
\begin{figure}[htbp]
  \begin{minipage}[b]{0.5\linewidth}
    \centering
    \begin{overpic}[width=\linewidth]{./pics/wdoesnotforceitsborder}
    \put(45,84){\tiny green}
    \put(135,40){\tiny blue}
    \put(80,5){\tiny yellow ribbon}
     \put(30,52){\tiny yellow ribbon}
    \end{overpic}
    \caption{Two level-2-supertiles of type $t_{2,5}$ have different neighbors.}
    \label{f:wdoesnotforceitsborder}
  \end{minipage}
  %\hspace{0.5cm}
  \begin{minipage}[b]{0.48\linewidth}
    \centering
    \begin{overpic}[width=\linewidth]{./pics/Knfb}
    \end{overpic}
    \caption{The decoration of the yellow ribbons in the figure to the left duplicate themselves when subdivided.}
    \label{f:Knfb}
  \end{minipage}
\end{figure}

A natural question arises. Given a tiling generated by a substitution rule which does not force its border, can we find an alternative substitution which forces its border? In other words, is the property of forcing its border a property of the tiling or of the substitution itself?
This question was answered by  Anderson and Putnam in \cite{PutnamBible95}. It is a property of the substitution and not of the tiling.
In fact, they introduced a method, known as the Anderson-Putnam trick, to modify any substitution to one that forces its border.
The modified substitution consists in rewriting the substitution in terms of so-called collared tiles.
\begin{defn}[collared tiles]
  If we can label the tiles of a tiling  not only by their own type but by the pattern of their nearest neighbors, then we call such labels \emph{collared tiles}.
\end{defn}

We give an example of a collared substitution.
 \begin{exam}
 Suppose that $T=\ldots abbabbababbab\ldots$ is the Fibonacci tiling generated by the substitution rule $\sigma(a)=b$, $\sigma(b)=ab$. The neighbors of $a$, which we write in parenthesis, are always $a_1:=(b)a(b)$. The neighbors of $b$ are $b_1:=(a)b(b)$, $b_2:=(b)b(a)$, and $b_3:=(a)b(a)$, and  the patch $bbb$ never appears in the tiling.
The collared tiles are $a_1,b_1,b_2,b_3$. We remark that $a_1,b_1,b_2,b_3$ are not patches of three tiles, but single tiles that remember their neighbors.
The tiles $b_1,b_2,b_3$ as uncollared tiles are all the same as $b$, but as collared tiles they are three distinct copies of $b$.
The substitution forces its border because for  $k=2$ we have
\begin{eqnarray*}
&&\sigma^2(a_1)=(b_3)a_1b_1(b_2)\\
&&\sigma^2(b_1)=(b_1)b_2a_1b_1(b_2)\\
&&\sigma^2(b_2)=(b_1)b_2a_1b_3(a_1)\\
&&\sigma^2(b_3)=(b_1)b_2 a_1b_3(a_1).
\end{eqnarray*}
 \end{exam}
The more collared tiles we have, the more complicated the task of finding the integer $k$.

\begin{figure}[htbp]
  \begin{minipage}[b]{0.3\linewidth}
    \centering
    \includegraphics[width=\linewidth]{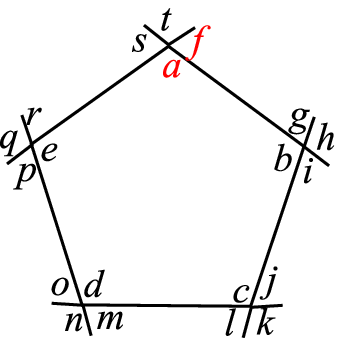}
    \caption{A collared tile.}
    \label{f:collaredtilenotation}
  \end{minipage}
  \hspace{0.5cm}
  \begin{minipage}[b]{0.5\linewidth}
    \centering
    \includegraphics[width=\linewidth]{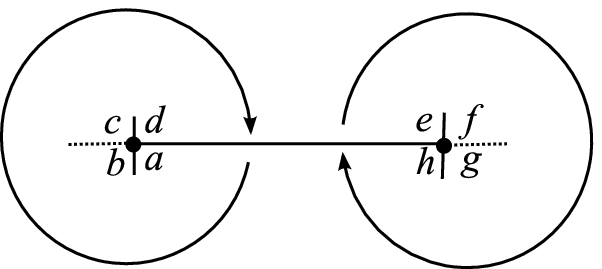}
    \caption{A collared edge and two collared vertices.}
    \label{f:collarededgetilenotation}
  \end{minipage}
\end{figure}

Back to our substitution rule, we need to determine the collared tiles.
In principle, we should look at all the neighbor pentagons of a pentagon to obtain the collared tiles.
But we do not need to do this because each neighbor tile is determined completely once we know the decorations of an edge,
and thus we are tailoring the Anderson-Putnam's trick to our needs.
We need to find all the possible decorations on the outside of each pentagon.
Any collared tile is of the form shown in Figure \ref{f:collaredtilenotation}. We will write it as the pair
$$((a,b,c,d,e), (f,g,h\mid i,j,k\mid l, m,n\mid o,p ,q\mid,r,s,t)),$$
 where the letters can take integer values from  1 to 5; the first ordered set $(a,b,c,d,e)$ represents the interior decoration, and the second one the exterior decoration.  Notice that $a$ and $f$ are decorations of both an edge and a vertex and the rest of the letters are written in clockwise order as shown in Figure  \ref{f:collaredtilenotation}. The letters $h,k,n,q,t$ are zero if the corresponding vertices are of degree $3$.
 We will further shorten our notation by writing the collared tile
 $$((1,2,3,4,5), (f,g,h\mid i,j,k\mid l, m,n\mid o,p ,q\mid,r,s,t))$$
 as
 $$(f,g,h\mid i,j,k\mid l, m,n\mid o,p ,q\mid,r,s,t).$$

 Any collared edge and vertex is of the form shown in Figure \ref{f:collarededgetilenotation}.
 We will write a collared edge and a collared vertex as
 $$(a,b,c,d\mid e,f,g,h),\qquad (a,b,c,d),$$
 respectively, where the letters take integer values from 1 to 5.
 The letters $b,c,f,g$ can take the value zero if the corresponding vertices are of degree $3$.

\begin{figure}
  % Requires \usepackage{graphicx}
  \centering
  \includegraphics[scale=.7]{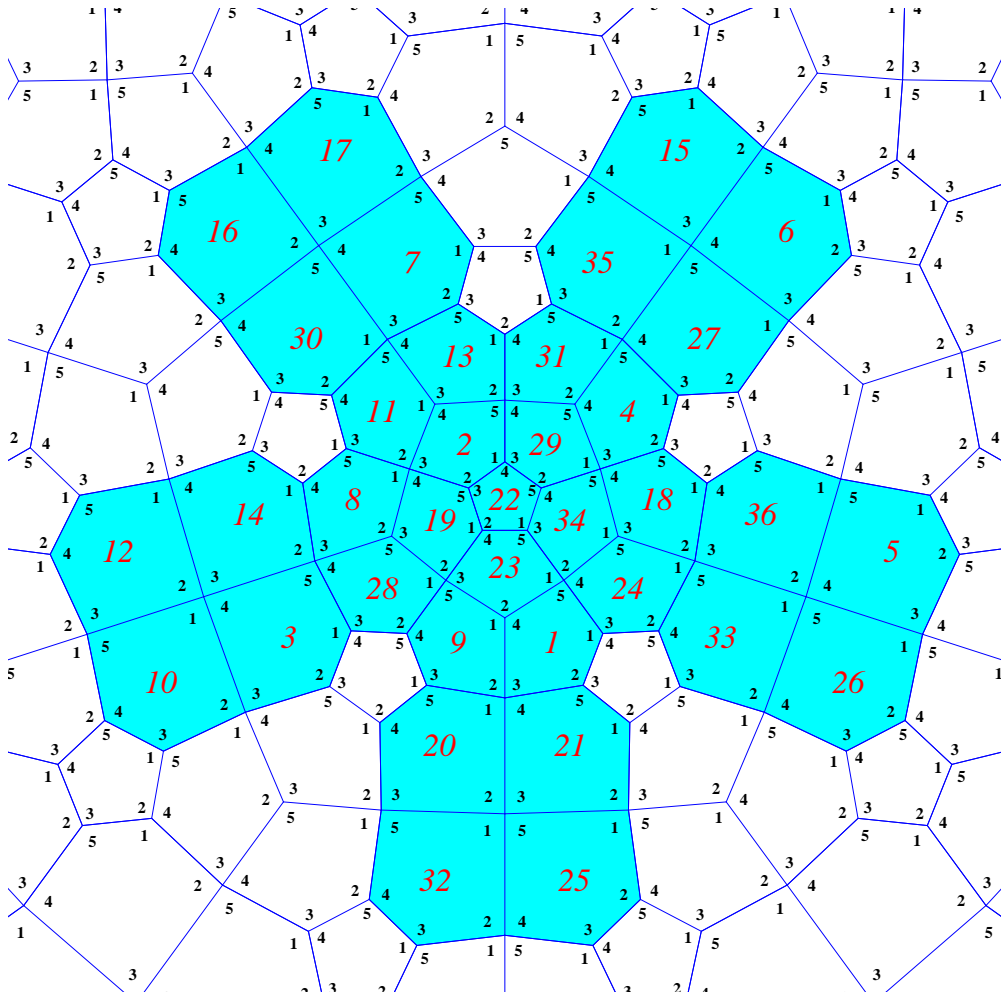}\\
  \caption{The collared tiles of $K$.}\label{f:collaredtiles}
\end{figure}

\begin{figure}
  % Requires \usepackage{graphicx}
  \centering
  \begin{overpic}[scale=.7]{./pics/decoratedtev1}
  \put(237,98){$\leftarrow$ \tiny  collared tile}
  \put(305,55){$\leftarrow$ \tiny  collared edge}
  \put(322,41){$\leftarrow$ \tiny  collared vertex}
  \put(305,76){$\leftarrow$ \tiny  decoration}
  \end{overpic}
  \caption{The red numbers are the collared tiles. The blue numbers are the collared edges; we put a minus sign if it goes in reverse direction. The black numbers with yellow background are the collared vertices. The remaining black numbers are decorations.}\label{f:collaredtiles1}
\end{figure}

\begin{thm}\label{t:collaredtilesedgesvertices}
  The collared tiles, collared edges and collared vertices of $K$ are the ones shown in Table \ref{t:collaredtiles}, Table \ref{t:collarededges}, Table \ref{t:collaredvertices}, respectively. See Figure \ref{f:collaredtiles} and Figure \ref{f:collaredtiles1}.
\end{thm}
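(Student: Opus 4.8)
The plan is to treat Theorem~\ref{t:collaredtilesedgesvertices} as a finite enumeration carried out with the substitution rule of Figure~\ref{f:subdivisionmapnewdecorated} as the engine. First I would record the structural fact that makes the problem finite. By the hierarchical structure of $\omega$ established in \cite{MRScontinuoushull} and \cite{MRSdiscretehull}, any combinatorial tiling $L$ locally isomorphic to $K$ is recognizable: every pentagon $p$ of $L$, together with its corona (the pentagons sharing an edge or a vertex with $p$), occurs inside some supertile $\omega^{n}(t)$, where $t$ ranges over the $11$ decorated prototiles of Figure~22 in \cite{MRSdiscretehull}. Moreover, as noted in the paragraph preceding the theorem, a neighbouring pentagon of $p$ is completely determined by the decoration of the shared edge; hence the collared type of $p$ is encoded by the finite datum $((a,b,c,d,e),(f,g,h\mid i,j,k\mid l,m,n\mid o,p,q\mid r,s,t))$, the first tuple being the interior decoration of $p$ and the second the decorations read off the edges of the five neighbours, with the convention that $h,k,n,q,t$ vanish at degree-$3$ vertices. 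Only finitely many such data are a priori combinatorially possible, and the same reduction applies to collared edges $(a,b,c,d\mid e,f,g,h)$ and collared vertices $(a,b,c,d)$.

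Next I would generate the candidate list. Apply $\omega$ to each of the $11$ prototiles and, for every pentagon interior to the resulting supertile (one whose entire corona is present), read off its collared tile, its collared edges, and its collared vertices, using the labelling conventions of Figures~\ref{f:collaredtilenotation} and~\ref{f:collarededgetilenotation} and the sign convention for reversed edges in Figure~\ref{f:collaredtiles1}. Because the subdivision rule alters only a bounded neighbourhood of each edge, the set of collared types that appears stabilises after a small number of iterations; I would verify explicitly that no new collared tile, edge, or vertex is produced on passing from $\omega^{n}$ to $\omega^{n+1}$ for the relevant small $n$ (in practice $n=2$, possibly $n=3$). Collecting the types obtained then yields exactly the $36$ collared tiles of Table~\ref{t:collaredtiles}, the $45$ collared edges of Table~\ref{t:collarededges}, and the $10$ collared vertices of Table~\ref{t:collaredvertices}; Figures~\ref{f:collaredtiles} and~\ref{f:collaredtiles1} are the explicit record of this computation.

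The main obstacle is completeness: showing that the tables omit nothing. For this I would combine two ingredients. (i) Recognizability of $\omega$ guarantees that every pentagon-with-corona in any $L$ locally isomorphic to $K$ already sits inside $\omega^{n}(t)$ for one of the small $n$ above and one of the $11$ prototiles $t$, so its collared type must be among those generated; this reduces the infinite problem to the finitely many supertiles $\omega^{n}(t)$. (ii) The stabilisation check of the previous paragraph shows that what is generated at level $n$ is everything ever generated. Step~(ii) is a direct but lengthy case analysis over those finitely many supertiles, with careful bookkeeping of interior and exterior decorations, of vertex degrees, and of orientation. As a final consistency cross-check I would confirm the incidence relations: every collared edge in Table~\ref{t:collarededges} is a face of some collared tile in Table~\ref{t:collaredtiles} and conversely, and likewise for collared vertices. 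This both validates the enumeration and prepares the CW-complex construction of the later sections.
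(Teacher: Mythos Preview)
Your strategy is sound in outline but differs from the paper's and has one soft spot worth flagging.

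\textbf{How the paper argues.} Rather than iterating $\omega$ on the eleven \emph{uncollared} prototiles and reading off interiors, the paper subdivides a single \emph{generic collared} tile once. Using Lemma~1.27 of \cite{MRSdiscretehull} it parametrises the decorations that can occur around the six subtiles (Figure~\ref{f:collaring}), obtaining two families of patterns depending on integer parameters $x,a,b,c,d,e,y_i$. Specialising the parameters yields a list of at most $36$ possible exterior decorations; a short combinatorial check (again via Lemma~1.27) rules out five spurious cases, and all $36$ survivors are then exhibited inside $K_3$. Collared edges are read off the $36$ tiles, and collared vertices from the edges. The point is that ``subdivide a collared tile'' is a \emph{closed} operation on collared types: because the parent's neighbours are known, every child already has a fully determined collar. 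Completeness is then automatic---no stabilisation argument is needed.

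\textbf{Where your argument needs care.} Your step~(ii), ``no new collared type appears in passing from $\omega^{n}$ to $\omega^{n+1}$, hence none ever will'', is not self-evidently valid when you iterate on uncollared prototiles. A pentagon interior to $\omega^{n+2}(t)$ may descend from a parent that was \emph{not} interior to $\omega^{n+1}(t)$ (it lay on the boundary between two level-$(n{+}1)$ supertiles), so its collar is not determined by anything in $C_{n+1}$; $C_n=C_{n+1}$ does not by itself force $C_{n+1}=C_{n+2}$. You can repair this either by iterating on collared tiles as the paper does (so the operation is closed), or by invoking minimality/repetitivity together with FLC to get a uniform bound $N$ and then checking up to that $N$; but ``recognizability'' alone is not the right tool here---what you actually use is the hierarchical definition of the hull (every finite patch sits in some $\omega^{n}(t)$) plus FLC. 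With that correction your approach goes through and reaches the same lists.
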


 \begin{table}
  \centering
\begin{tabular}{ |c | c |}
\hline
 & Collared tiles\\
\hline
 1 & $(4,3,0 \mid 5,4,1 \mid 2,1,0 \mid 2,1,2 \mid 4,3,0)$\\
 2 & $(4,3,0 \mid 5,4,1 \mid 2,1,0 \mid 3,2,3 \mid 4,3,0)$\\
 3 & $(4,3,0 \mid 5,4,1 \mid 2,1,2 \mid 3,2,3 \mid 4,3,0)$\\
 4 & $(4,3,0 \mid 5,4,5 \mid 1,5,0 \mid 2,1,2 \mid 4,3,0)$\\
 5 & $(4,3,0 \mid 5,4,5 \mid 1,5,1 \mid 2,1,2 \mid 4,3,0)$\\
 6 & $(4,3,0 \mid 5,4,5 \mid 1,5,1 \mid 3,2,3 \mid 4,3,0)$\\
 7 & $(4,3,0 \mid 5,4,5 \mid 1,5,2 \mid 3,2,3 \mid 4,3,0)$\\
 8 & $(4,3,0 \mid 5,4,5 \mid 2,1,0 \mid 2,1,0 \mid 3,2,3)$\\
 9 & $(4,3,4 \mid 1,5,0 \mid 1,5,0 \mid 2,1,2 \mid 3,2,0)$\\
10 & $(4,3,4 \mid 1,5,0 \mid 1,5,0 \mid 2,1,2 \mid 3,2,3)$\\
11 & $(4,3,4 \mid 1,5,0 \mid 1,5,0 \mid 2,1,3 \mid 4,3,0)$\\
12 & $(4,3,4 \mid 1,5,1 \mid 2,1,0 \mid 2,1,0 \mid 3,2,3)$\\
13 & $(4,3,4 \mid 5,4,0 \mid 1,5,1 \mid 3,2,0 \mid 3,2,0)$\\
14 & $(4,3,4 \mid 5,4,1 \mid 2,1,2 \mid 3,2,0 \mid 3,2,0)$\\
15 & $(4,3,4 \mid 5,4,5 \mid 1,5,1 \mid 3,2,0 \mid 3,2,0)$\\
16 & $(4,3,4 \mid 5,4,5 \mid 2,1,0 \mid 2,1,0 \mid 3,2,3)$\\
17 & $(4,3,4 \mid 5,4,5 \mid 2,1,2 \mid 3,2,0 \mid 3,2,0)$\\
18 & $(4,3,5 \mid 1,5,0 \mid 1,5,1 \mid 3,2,0 \mid 3,2,0)$\\
19 & $(4,3,5 \mid 1,5,0 \mid 2,1,2 \mid 3,2,0 \mid 3,2,0)$\\
20 & $(4,3,5 \mid 1,5,1 \mid 2,1,0 \mid 2,1,0 \mid 3,2,3)$\\
21 & $(4,3,5 \mid 1,5,1 \mid 2,1,2 \mid 3,2,0 \mid 3,2,0)$\\
22 & $(5,4,0 \mid 1,5,0 \mid 2,1,0 \mid 3,2,0 \mid 4,3,0)$\\
23 & $(5,4,0 \mid 1,5,1 \mid 2,1,0 \mid 2,1,0 \mid 3,2,4)$\\
24 & $(5,4,0 \mid 5,4,0 \mid 1,5,1 \mid 2,1,0 \mid 3,2,3)$\\
25 & $(5,4,0 \mid 5,4,0 \mid 1,5,1 \mid 2,1,2 \mid 3,2,3)$\\
26 & $(5,4,0 \mid 5,4,0 \mid 1,5,1 \mid 2,1,2 \mid 4,3,4)$\\
27 & $(5,4,0 \mid 5,4,0 \mid 1,5,1 \mid 2,1,3 \mid 4,3,4)$\\
28 & $(5,4,0 \mid 5,4,0 \mid 1,5,2 \mid 3,2,0 \mid 3,2,3)$\\
29 & $(5,4,0 \mid 5,4,0 \mid 1,5,2 \mid 3,2,0 \mid 4,3,4)$\\
30 & $(5,4,0 \mid 5,4,0 \mid 1,5,2 \mid 3,2,3 \mid 4,3,4)$\\
31 & $(5,4,0 \mid 5,4,5 \mid 2,1,0 \mid 2,1,0 \mid 3,2,4)$\\
32 & $(5,4,5 \mid 1,5,0 \mid 1,5,0 \mid 2,1,2 \mid 3,2,3)$\\
33 & $(5,4,5 \mid 1,5,0 \mid 1,5,0 \mid 2,1,2 \mid 3,2,4)$\\
34 & $(5,4,5 \mid 1,5,0 \mid 1,5,0 \mid 2,1,3 \mid 4,3,0)$\\
35 & $(5,4,5 \mid 1,5,0 \mid 1,5,0 \mid 2,1,3 \mid 4,3,4)$\\
36 & $(5,4,5 \mid 1,5,1 \mid 2,1,0 \mid 2,1,0 \mid 3,2,4)$\\
\hline
\end{tabular}
  \caption{Collared tiles of $K$.}\label{t:collaredtiles}
\end{table}

\begin{table}
  \centering
\begin{tabular}{ |c | c |c|}
\hline
& Collared edges & \\
\hline
 1 $(1,0,2,4 \mid 3,4,1,2)$ &  16 $(1,2,4,5 \mid 4,5,1,2)$ & 31 $(2,3,4,5 \mid 4,5,2,3)$\\
 2 $(1,0,2,4 \mid 3,4,5,2)$ &  17 $(1,3,4,5 \mid 4,0,5,2)$ & 32 $(2,3,5,1 \mid 5,0,1,3)$\\
 3 $(1,0,2,4 \mid 3,5,1,2)$ &  18 $(1,3,4,5 \mid 4,5,1,2)$ & 33 $(2,3,5,1 \mid 5,0,2,3)$\\
 4 $(1,0,3,4 \mid 3,0,5,2)$ &  19 $(2,0,3,5 \mid 4,1,2,3)$ & 34 $(2,3,5,1 \mid 5,1,2,3)$\\
 5 $(1,0,3,4 \mid 3,4,1,2)$ &  20 $(2,0,3,5 \mid 4,5,1,3)$ & 35 $(2,4,5,1 \mid 5,0,1,3)$\\
 6 $(1,0,3,5 \mid 4,0,1,2)$ &  21 $(2,0,3,5 \mid 4,5,2,3)$ & 36 $(2,4,5,1 \mid 5,1,2,3)$\\
 7 $(1,0,3,5 \mid 4,5,1,2)$ &  22 $(2,0,4,1 \mid 5,0,2,3)$ & 37 $(3,0,4,1 \mid 5,1,2,4)$\\
 8 $(1,2,3,4 \mid 3,0,5,2)$ &  23 $(2,0,4,1 \mid 5,1,2,3)$ & 38 $(3,0,4,1 \mid 5,1,3,4)$\\
 9 $(1,2,3,4 \mid 3,4,1,2)$ &  24 $(2,0,4,5 \mid 4,0,1,3)$ & 39 $(3,0,4,1 \mid 5,2,3,4)$\\
10 $(1,2,3,4 \mid 3,4,5,2)$ &  25 $(2,0,4,5 \mid 4,5,2,3)$ & 40 $(3,0,5,1 \mid 5,0,2,4)$\\
11 $(1,2,3,4 \mid 3,5,1,2)$ &  26 $(2,3,4,1 \mid 5,0,1,3)$ & 41 $(3,0,5,1 \mid 5,1,3,4)$\\
12 $(1,2,3,5 \mid 4,0,5,2)$ &  27 $(2,3,4,1 \mid 5,1,2,3)$ & 42 $(3,4,5,1 \mid 5,0,2,4)$\\
13 $(1,2,3,5 \mid 4,5,1,2)$ &  28 $(2,3,4,5 \mid 4,0,1,3)$ & 43 $(3,4,5,1 \mid 5,1,2,4)$\\
14 $(1,2,4,5 \mid 4,0,1,2)$ &  29 $(2,3,4,5 \mid 4,1,2,3)$ & 44 $(3,4,5,1 \mid 5,1,3,4)$\\
15 $(1,2,4,5 \mid 4,0,5,2)$ &  30 $(2,3,4,5 \mid 4,5,1,3)$ & 45 $(3,4,5,1 \mid 5,2,3,4)$\\
\hline
\end{tabular}
  \caption{Collared edges of $K$.}\label{t:collarededges}
\end{table}

\begin{table}
\centering
\begin{tabular}{|c|c|}
\hline
&Collared vertices\\
\hline
  1& $(1,0,2,4)$\\
2 &$(1,0,3,4)$\\
3 &$(1,0,3,5)$\\
4 &$(1,2,3,4)$\\
5 &$(1,2,3,5)$\\
6 &$(1,2,4,5)$\\
7 &$(1,3,4,5)$\\
8 &$(2,0,3,5)$\\
9 &$(2,0,4,5)$\\
10& $(2,3,4,5)$\\
\hline
\end{tabular}
  \caption{Collared vertices of $K$.}\label{t:collaredvertices}
\end{table}

\begin{figure}
  % Requires \usepackage{graphicx}
  \centering
  \includegraphics[scale=1.5]{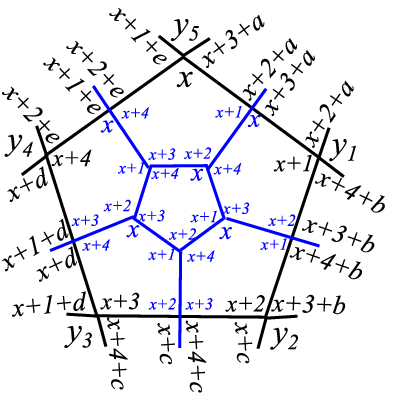}\\
  \caption{Possible collared tiles of the subdivision of a collared tile.}\label{f:collaring}
\end{figure}
\begin{proof}
We start by computing the collared tiles.
By Lemma 1.27 in \cite{MRSdiscretehull}, the possible collared tiles of the subdivision of a collared tile is shown in Figure \ref{f:collaring},  where
\begin{eqnarray*}
&&x=1,\ldots,5,\\
&&a,b,c,d,e,=0,1,
\end{eqnarray*}
and the symbols $y_1=y_2=y_3=y_4=y_5=0$ can take the zero value or
\begin{eqnarray*}
x+2+a&<y_1<&x+4+b\\
x+3+b&<y_2<&x+c\\
x+4+c&<y_3<&x+1+d\\
x+d&<y_4<&x+2+e\\
x+1+e&<y_5<&x+3+a.
\end{eqnarray*}

Reading the two possible patterns  from Figure \ref{f:collaring} we get

 \begin{eqnarray*}
  (1)&&((x,x+1,x+2,x+3,x+4),\\
  &&(x+4,x+3,0\mid x,x+4,0\mid x+1,x,0\mid x+2,x+1,0\mid x+3,x+2,0)).\\
  (2)&&((x,x+1,x+2,x+3,x+4),\\
  &&(x+3+a,x+2+a,x+3+a\mid x,x+4,0\mid x,x+4,0\mid\\
  && x+1,x,x+1+e\mid x+2+e,x+1+e,y_5)).
\end{eqnarray*}
 Running pattern (2) for all possible values of $x,a,e,$ and reorganizing them so the interior decoration is (1,2,3,4,5) we get
the possible exterior patterns\\

\begin{tabular}{ |c | c |c| c |}
\hline
  &   Exterior decoration of a tile & $y_5=0$ or & $i.e.\, y_5=$\\
  \hline
  1& $(4,3,4 \mid 1,5,0 \mid 1,5,0 \mid 2,1,2 \mid 3,2,y_5)$& $2 < y_5 < 4$ & 0,3\\
  2& $(4,3,4 \mid 1,5,0 \mid 1,5,0 \mid 2,1,3 \mid 4,3,y_5)$& $3 < y_5 < 4$ & 0\\
  3& $(5,4,5 \mid 1,5,0 \mid 1,5,0 \mid 2,1,2 \mid 3,2,y_5)$& $2 < y_5 < 5$ & 0,3,4\\
  4& $(5,4,5 \mid 1,5,0 \mid 1,5,0 \mid 2,1,3 \mid 4,3,y_5)$& $3 < y_5 < 5$ & 0,4\\
  5& $(4,3,y_5 \mid 5,4,5 \mid 2,1,0 \mid 2,1,0 \mid 3,2,3)$& $3 < y_5 < 5$ & 0,4\\
  6& $(5,4,y_5 \mid 5,4,5 \mid 2,1,0 \mid 2,1,0 \mid 3,2,4)$& $4 < y_5 < 5$ & 0\\
  7& $(4,3,y_5 \mid 1,5,1 \mid 2,1,0 \mid 2,1,0 \mid 3,2,3)$& $3 < y_5 < 1$ & 0,4,5\\
  8& $(5,4,y_5 \mid 1,5,1 \mid 2,1,0 \mid 2,1,0 \mid 3,2,4)$& $4 < y_5 < 1$ & 0,5\\
  9& $(4,3,4 \mid 5,4,y_5 \mid 1,5,1 \mid 3,2,0 \mid 3,2,0)$& $4 < y_5 < 1$ & 0,5\\
 10& $(4,3,5 \mid 1,5,y_5 \mid 1,5,1 \mid 3,2,0 \mid 3,2,0)$& $5 < y_5 < 1$ & 0\\
 11& $(4,3,4 \mid 5,4,y_5 \mid 2,1,2 \mid 3,2,0 \mid 3,2,0)$& $4 < y_5 < 2$ & 0,1,5\\
 12& $(4,3,5 \mid 1,5,y_5 \mid 2,1,2 \mid 3,2,0 \mid 3,2,0)$& $5 < y_5 < 2$ & 0,1\\
 13& $(4,3,0 \mid 5,4,5 \mid 1,5,y_5 \mid 2,1,2 \mid 4,3,0)$& $5 < y_5 < 2$ & 0,1\\
 14& $(4,3,0 \mid 5,4,1 \mid 2,1,y_5 \mid 2,1,2 \mid 4,3,0)$& $1 < y_5 < 2$ & 0\\
 15& $(4,3,0 \mid 5,4,5 \mid 1,5,y_5 \mid 3,2,3 \mid 4,3,0)$& $5 < y_5 < 3$ & 0,1,2\\
 16& $(4,3,0 \mid 5,4,1 \mid 2,1,y_5 \mid 3,2,3 \mid 4,3,0)$& $1 < y_5 < 3$ & 0,2\\
 17& $(5,4,0 \mid 5,4,0 \mid 1,5,1 \mid 2,1,y_5 \mid 3,2,3)$& $1 < y_5 < 3$ & 0,2\\
 18& $(5,4,0 \mid 5,4,0 \mid 1,5,2 \mid 3,2,y_5 \mid 3,2,3)$& $2 < y_5 < 3$ & 0\\
 19& $(5,4,0 \mid 5,4,0 \mid 1,5,1 \mid 2,1,y_5 \mid 4,3,4)$& $1 < y_5 < 4$ & 0,2,3\\
 20& $(5,4,0 \mid 5,4,0 \mid 1,5,2 \mid 3,2,y_5 \mid 4,3,4)$& $2 < y_5 < 4$ &0,3\\
 \hline
\end{tabular}\\\\

However, the cases $y_5=0$ in rows 3,7,11,15,19 cannot occur because that would imply that the 3-degree decorations 125, 123, 145, 234, 345 are  decorations of $K$, which are not by Lemma 1.27 in \cite{MRSdiscretehull}.
Therefore the number of exterior decorations that pattern (2) gives are at most $5+2\times 15=35$.
Pattern (1) gives only one new exterior decoration, which is
$(5,4,0 \mid 1,5,0 \mid 2,1,0 \mid 3,2,0 \mid 4,3,0).$
Since all these exterior decorations occur in $K_3$, the collared tiles are $35+1=36$.

By Lemma 1.27 in \cite{MRSdiscretehull} the possible collared edges are of the form shown in Figure \ref{f:edgescabelon}.
\begin{figure}
  % Requires \usepackage{graphicx}
  \centering
  \includegraphics[scale=1]{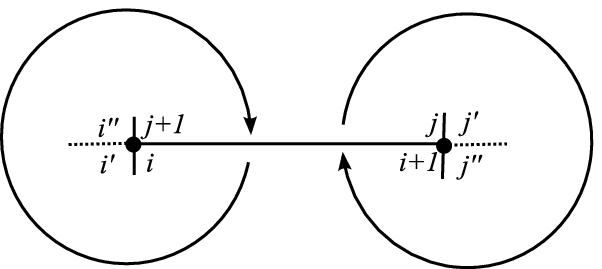}\\
  \caption{The decoration of a collared edge: $(i,i',i'',j+1\mid j, j',j'',i+1)$.  The dots mean that we might have 3- or 4- degree vertices.}\label{f:edgescabelon}
\end{figure}
\begin{figure}
  % Requires \usepackage{graphicx}
  \centering
  \includegraphics[scale=.42]{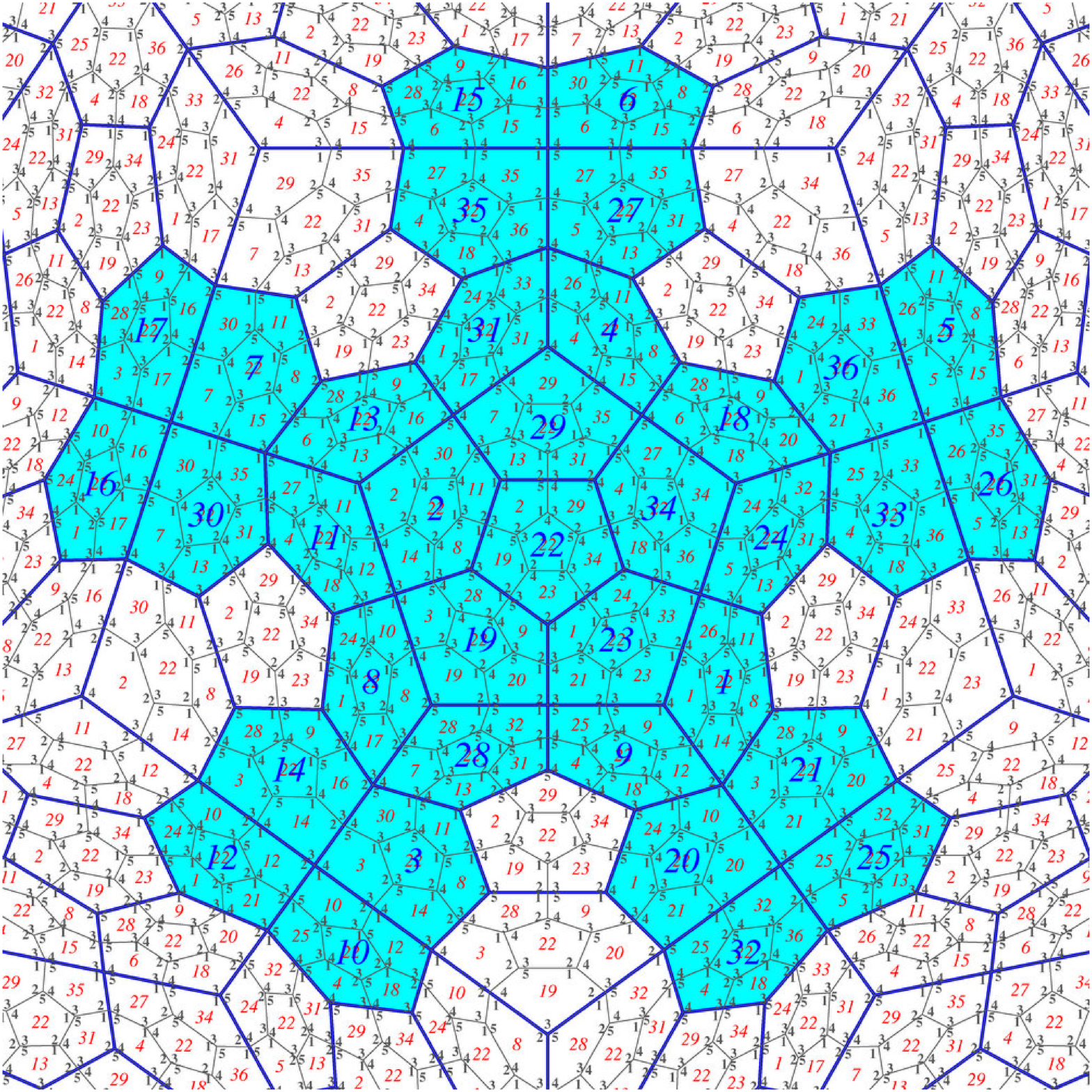}\\
  \caption{Collared substitution $\omega$ on a collared tile. }\label{f:collaredsubstitution}
\end{figure}
Reading the collared edges from our 36 collared pentagons, we get 45 collared edges, and this is of course under the observation that $(i,i',i'',j+1\mid j, j',j'',i+1)$ can also be written as $( j, j',j'',i+1 \mid i,i',i'',j+1)$, and if $i''=0$ then we can swap the values of $i'$ and $i''$.
It is interesting to notice that 18 collared edges start with $i$, where $i=1,2,3,4,5$.

Reading the collared vertices from our 45 collared edges, we get 10 collared vertices, which are precisely the ten decorated vertices given by Lemma 1.27 in \cite{MRSdiscretehull}.
\end{proof}
\begin{defn}[\index{collared substitution $\omega$ on a collared tile}collared substitution $\omega$ on a collared tile]\label{d:collaredsubsoncollaredtile}
Let\, $t_1,\cdots,t_{36}$ be the collared tiles of $K$ listed in Table \ref{t:collaredtiles}.
The collared substitution $\omega$ on a collared tile $t_i$ is defined as in the following list, which is read from Figure \ref{f:collaredsubstitution}.
\end{defn}

\begin{minipage}[b]{0.5\linewidth}
 $\omega(t_{1})= (t_{22},t_{11},t_{8},t_{14},t_{1},t_{26})$\\
$\omega(t_{2})= (t_{22},t_{11},t_{8},t_{14},t_{2},t_{30})$\\
$\omega(t_{3})= (t_{22},t_{11},t_{8},t_{14},t_{3},t_{30})$\\
$\omega(t_{4})= (t_{22},t_{11},t_{8},t_{15},t_{4},t_{26})$\\
$\omega(t_{5})= (t_{22},t_{11},t_{8},t_{15},t_{5},t_{26})$\\
$\omega(t_{6})= (t_{22},t_{11},t_{8},t_{15},t_{6},t_{30})$\\
$\omega(t_{7})= (t_{22},t_{11},t_{8},t_{15},t_{7},t_{30})$\\
$\omega(t_{8})= (t_{22},t_{10},t_{8},t_{17},t_{1},t_{24})$\\
$\omega(t_{9})= (t_{22},t_{9},t_{12},t_{18},t_{4},t_{25})$\\
$\omega(t_{10})=(t_{22},t_{10},t_{12},t_{18},t_{4},t_{25})$\\
$\omega(t_{11})=(t_{22},t_{11},t_{12},t_{18},t_{4},t_{27})$\\
$\omega(t_{12})=(t_{22},t_{10},t_{12},t_{21},t_{1},t_{24})$\\
$\omega(t_{13})=(t_{22},t_{9},t_{16},t_{13},t_{6},t_{28})$\\
$\omega(t_{14})=(t_{22},t_{9},t_{16},t_{14},t_{3},t_{28})$\\
$\omega(t_{15})=(t_{22},t_{9},t_{16},t_{15},t_{6},t_{28})$\\
$\omega(t_{16})=(t_{22},t_{10},t_{16},t_{17},t_{1},t_{24})$\\
$\omega(t_{17})=(t_{22},t_{9},t_{16},t_{17},t_{3},t_{28})$\\
$\omega(t_{18})=(t_{22},t_{9},t_{20},t_{18},t_{6},t_{28})$
  \end{minipage}
  \hspace{0.5cm}
  \begin{minipage}[b]{0.5\linewidth}
$\omega(t_{19})=(t_{22},t_{9},t_{20},t_{19},t_{3},t_{28})$\\
$\omega(t_{20})=(t_{22},t_{10},t_{20},t_{21},t_{1},t_{24})$\\
$\omega(t_{21})=(t_{22},t_{9},t_{20},t_{21},t_{3},t_{28})$\\
$\omega(t_{22})=(t_{22},t_{34},t_{23},t_{19},t_{2},t_{29})$\\
$\omega(t_{23})=(t_{22},t_{33},t_{23},t_{21},t_{1},t_{24})$\\
$\omega(t_{24})=(t_{22},t_{32},t_{31},t_{13},t_{5},t_{24})$\\
$\omega(t_{25})=(t_{22},t_{32},t_{31},t_{13},t_{5},t_{25})$\\
$\omega(t_{26})=(t_{22},t_{35},t_{31},t_{13},t_{5},t_{26})$\\
$\omega(t_{27})=(t_{22},t_{35},t_{31},t_{13},t_{5},t_{27})$\\
$\omega(t_{28})=(t_{22},t_{32},t_{31},t_{13},t_{7},t_{28})$\\
$\omega(t_{29})=(t_{22},t_{35},t_{31},t_{13},t_{7},t_{29})$\\
$\omega(t_{30})=(t_{22},t_{35},t_{31},t_{13},t_{7},t_{30})$\\
$\omega(t_{31})=(t_{22},t_{33},t_{31},t_{17},t_{1},t_{24})$\\
$\omega(t_{32})=(t_{22},t_{32},t_{36},t_{18},t_{4},t_{25})$\\
$\omega(t_{33})=(t_{22},t_{33},t_{36},t_{18},t_{4},t_{25})$\\
$\omega(t_{34})=(t_{22},t_{34},t_{36},t_{18},t_{4},t_{27})$\\
$\omega(t_{35})=(t_{22},t_{35},t_{36},t_{18},t_{4},t_{27})$\\
$\omega(t_{36})=(t_{22},t_{33},t_{36},t_{21},t_{1},t_{24})$.
  \end{minipage}
\begin{figure}
  % Requires \usepackage{graphicx}
  \centering
  \includegraphics[scale=1]{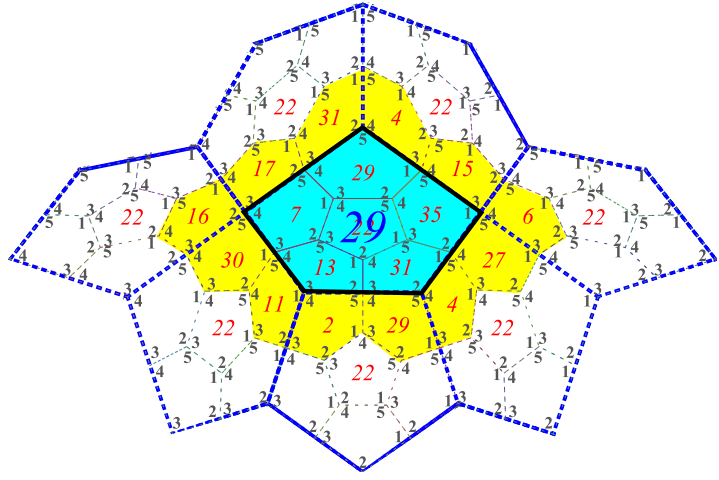}\\
  \caption{The collared substitution forces its border.}\label{f:forcingitsbordertile29}
\end{figure}
\begin{lem}
  The collared substitution $\omega$ forces its border for $k=1$.
\end{lem}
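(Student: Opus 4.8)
The plan is to verify directly from the explicit list of $\omega(t_i)$ in Definition \ref{d:collaredsubsoncollaredtile} that the level-$1$ supertile $\omega(t_i)$ already determines the full collar (the pattern of neighbouring tiles along the entire boundary of the supertile), so that two level-$1$ supertiles of the same type $t_i$ necessarily have the same neighbouring pattern. Concretely, I would argue as follows. By the construction of the collared substitution (Figure \ref{f:collaredsubstitution}), the six collared tiles appearing in each $\omega(t_i)$ sit in a fixed combinatorial arrangement: one central pentagon surrounded by five others. The key observation is that a collared tile, by Theorem \ref{t:collaredtilesedgesvertices}, carries in its label the exterior decoration along \emph{all five} of its edges; hence each of the five outer pentagons in $\omega(t_i)$ already "knows", via its own collared label, the decoration on the far side of each of its boundary edges — in particular on those edges that lie on the boundary $\partial\omega(t_i)$ of the supertile. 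Since the decoration on an edge determines the adjacent pentagon completely (this is exactly the principle used in the proof of Theorem \ref{t:collaredtilesedgesvertices}: "each neighbor tile is determined completely once we know the decorations of an edge"), the neighbour of $\omega(t_i)$ across any boundary edge is forced by the data already recorded in the labels $t_j$ occurring in $\omega(t_i)$. Therefore the whole collar of $\omega(t_i)$ is a function of $i$ alone, which is precisely the statement that $\omega$ forces its border with $k=1$.

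To make this rigorous I would proceed in three steps. First, fix the combinatorial incidence data of the patch $\omega(t_i)$: label the five boundary edges of the supertile and, for each, identify which of the six constituent collared tiles $t_j$ contributes that edge and which of that tile's five exterior slots (in the notation $(f,g,h\mid i,j,k\mid\cdots)$) records the decoration on the outside of that edge. This is read off once and for all from Figure \ref{f:collaredsubstitution}. Second, observe that for each $i$ and each boundary edge, the corresponding exterior-slot entries are determined by the tuple $\omega(t_i)=(t_{j_1},\dots,t_{j_6})$, hence by $i$; so two supertiles $\omega(t)$, $\omega(t')$ with $t,t'$ copies of the same $t_i$ carry identical exterior decoration all along their boundaries. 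Third, invoke the fact (from \cite{MRSdiscretehull}, Lemma 1.27, and already used in the proof above) that the collared tile lying on the far side of a given edge is uniquely determined by the decoration of that edge together with the decoration just inside it; it follows that the two supertiles have the same pattern of neighbouring collared tiles, i.e. the same collar. Finally I would note the single special case to keep track of — tile $t_{22}$, whose exterior decoration $(5,4,0\mid 1,5,0\mid 2,1,0\mid 3,2,0\mid 4,3,0)$ is the one coming from pattern (1) rather than pattern (2) — and check that it behaves no differently, which is clear from the list since $t_{22}$ occurs as the central tile of every $\omega(t_i)$.

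The main obstacle is purely bookkeeping rather than conceptual: one must correctly match, for each of the $36$ collared tiles, the five boundary edges of $\omega(t_i)$ against the exterior slots of the five outer constituent tiles, and confirm there is no ambiguity (e.g. no boundary edge whose outside decoration fails to be recorded because a would-be $3$-degree vertex forces a $0$ entry in a slot one needs). This is exactly the content that Figure \ref{f:forcingitsbordertile29} illustrates for the representative tile $t_{29}$: the collar of $\omega(t_{29})$ is visibly reconstructed from the labels in $\omega(t_{29})=(t_{22},t_{35},t_{31},t_{13},t_{7},t_{29})$. Having checked the representative case and noting that the verification for the remaining $35$ tiles is identical in structure — each $\omega(t_i)$ being a central $t_{22}$-copy ringed by five collared tiles whose labels encode their own outward-facing decorations — the lemma follows. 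I would therefore present the proof as: the incidence pattern of Figure \ref{f:collaredsubstitution} together with the list in Definition \ref{d:collaredsubsoncollaredtile} shows that the exterior decoration of $\omega(t_i)$, and hence (by Theorem \ref{t:collaredtilesedgesvertices} and \cite{MRSdiscretehull}, Lemma 1.27) its full neighbouring pattern, depends only on $i$; this is forcing the border with $k=1$, as illustrated in Figure \ref{f:forcingitsbordertile29}.
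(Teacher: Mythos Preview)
Your proposal is correct and takes essentially the same approach as the paper: verify from the explicit list in Definition~\ref{d:collaredsubsoncollaredtile} (together with Figure~\ref{f:collaredsubstitution}) that the collar of each level-$1$ supertile $\omega(t_i)$ is determined by $i$, with Figure~\ref{f:forcingitsbordertile29} illustrating the representative case $i=29$. The paper's own proof is in fact just those two sentences---point to the figure for $t_{29}$ and assert the same for the remaining $35$ tiles---so your writeup simply makes explicit the underlying reason (the collared labels of the five outer subtiles already record the exterior decorations across $\partial\omega(t_i)$) that the paper leaves to the picture.
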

\begin{proof}
In Figure \ref{f:forcingitsbordertile29} we show $\omega(t_{29})$ in blue color, and its neighbors in yellow color, which are "forced by the substitution". The same applies for the other collared tiles.
\end{proof}
\newpage

\section{Anderson-Putnam finite CW-complex $\Gamma$}
In this section we will construct an equivalence relation on $\Omega$.
Informally, we will say that $\cisom{L,x}$ and $\cisom{L',x'}$ are equivalent if $x$ and $x'$ lie in the same collared tile in the same spot.
 This is unambiguous if the point $x$ is an interior point of the collared tile;
 if it lies on the boundary then we will use the level-k-supertile coming from the forcing the border condition and say that they are equivalent if $\omega^k(x)$ and $\omega^k(x')$ lie in the same level-k-supertile  in the same spot. We will now state this formally.

\begin{defn}[patch $T(x)$]
For a point $x\in L$ define \emph{$T(x)$} to be the patch consisting of all the collared tiles in $L$ containing $x$.
For a collared patch $p$ of $L$, define $T(p)$ to be the collared patch consisting of itself and of its neighboring collared tiles.
\end{defn}

\begin{defn}[$\sim_p$, $\sim_T$]\index{$\sim_p$}
We say that $\cisom{L,x}\sim_p \cisom{L',x'}$ if there are two collared tiles $x\in t\in L$ and $x'\in t'\in L'$ and an isometry $\psi:t\to t'$ that preserves the decorations from the first collared tile to the second one and $\psi(x)=x'$.
This binary relation $\sim_p$ is not transitive, so we take the transitive closure of it and we denote it with \emph{$\sim_T$}.
\end{defn}

\begin{lem}
  The equivalence relation $\sim_T$ is closed in $\Omega\times \Omega$.
\end{lem}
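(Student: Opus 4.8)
The plan is to show that the complement of $\sim_T$ is open, equivalently that $\sim_T$ is a closed subset of $\Omega\times\Omega$. The natural approach is to exploit compactness of $(\Omega,d)$ together with the fact (established in the previous section) that the collared substitution $\omega$ forces its border for $k=1$: this is precisely what makes the relation $\sim_T$ ``local'' in a quantitative way, and local relations defined via matching on a fixed radius are automatically closed.

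First I would unwind the definition of $\sim_T$ as the transitive closure of $\sim_p$, and observe that a chain $\cisom{L_0,x_0}\sim_p\cdots\sim_p\cisom{L_n,x_n}$ can only have finitely many distinct steps: at each step the point lies in a collared tile, and the intermediate points $x_i$ (for $0<i<n$) must lie on boundaries of collared tiles, in fact on vertices or edges shared by consecutive tiles. Since there are only finitely many collared tiles ($36$), collared edges ($45$) and collared vertices ($10$) by Theorem~\ref{t:collaredtilesedgesvertices}, the transitive closure is achieved after a uniformly bounded number of steps; concretely $\cisom{L,x}\sim_T\cisom{L',x'}$ iff there is a single collared tile, edge, or vertex $\tau$, and isometric embeddings of $\tau$ into $L$ near $x$ and into $L'$ near $x'$ carrying $x$ and $x'$ to the same spot of $\tau$, together with compatible matchings. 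Because $\omega$ forces its border at level $1$, knowing the collared tile (resp. edge/vertex) containing $x$ in the same spot forces the entire pattern of neighbouring collared tiles to agree; so $\sim_T$ is equivalent to the condition that $T(x)$ and $T(x')$ are isometric collared patches via an isometry sending $x$ to $x'$, where $T(\cdot)$ is the patch of the previous definition. This reformulation is the key reduction.

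Next I would prove that this reformulated relation is closed. Let $(\cisom{L_k,x_k},\cisom{L_k',x_k'})$ be a sequence in $\sim_T$ converging to $(\cisom{L,x},\cisom{L',x'})$ in $\Omega\times\Omega$. By compactness of $\Omega$ we may pass to a subsequence; the metric $d$ is designed so that convergence means that for every $R$, eventually the balls $B(x_k,R,L_k)$ and $B(x,R,L)$ agree as decorated, degree-respecting, cell-preserving isometric patches up to a small shift of the basepoint (and likewise for the primed sequence). Fix $R$ larger than the diameter of any collared tile. For $k$ large, the collared tile (or edge, or vertex) structure around $x_k$ within radius $R$ is isometric to that around $x$, and similarly for the primed data; combined with the matching isometries $\psi_k$ furnished by $\cisom{L_k,x_k}\sim_T\cisom{L_k',x_k'}$, and the finiteness of the collection of collared tiles/edges/vertices (so only finitely many isometry types of local pattern occur, and the isometries $\psi_k$ lie in a compact group), we can pass to a further subsequence so that the $\psi_k$ converge to an isometry $\psi$ matching $T(x)$ with $T(x')$ and sending $x$ to $x'$. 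This exhibits $\cisom{L,x}\sim_T\cisom{L',x'}$, so $\sim_T$ is closed.

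The main obstacle, and the step deserving the most care, is the passage to the limit for the matching isometries $\psi_k$ when the limiting basepoint $x$ lies on a boundary cell: there the choice of which collared tile witnesses $\sim_p$ is not unique, the ``spot'' of $x$ can jump between an edge and an adjacent vertex in the limit, and one must check that the forcing-the-border property (Lemma on $k=1$) genuinely removes this ambiguity by pinning down the full patch $T(x)$ from the germ at $x$. I would handle this by working throughout with the patch $T(x)$ (all collared tiles containing $x$) rather than with a single tile, so that the witness data varies continuously; the finiteness statements of Theorem~\ref{t:collaredtilesedgesvertices} then make the space of possible witnesses discrete modulo the compact isometry group, which is exactly what forces convergence along a subsequence. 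A minor additional point is to verify that a limit of cell-preserving, decoration-preserving, degree-preserving isometries is again one of the same kind, which is immediate since all these are closed conditions.
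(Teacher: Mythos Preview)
Your overall strategy---compactness of $\Omega$, finiteness of collared data, extracting convergent subsequences of witnessing isometries---is sound and is in spirit what the paper does. The gap is in your ``key reduction''. You assert that $\sim_T$ is equivalent to the condition that $T(x)\cong T(x')$ as collared patches with $x\mapsto x'$, and you then use the forward implication to manufacture isometries $\psi_k:T(x_k)\to T(x'_k)$. But that forward implication is false. When $x$ lies on an edge, $\sim_p$ only matches \emph{one} of the two collared tiles in $T(x)$; the collared tile on the other side of the edge is not determined, because a collared tile knows its neighbour only as an uncollared tile, not as a collared one. The paper in fact records later that a single collared edge can occur as an edge of up to eight distinct collared tiles, so one easily has $\cisom{L,x}\sim_p\cisom{L',x'}$ with $T(x)\not\cong T(x')$. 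Your appeal to forcing the border does not repair this: forcing the border at level $k=1$ controls the collared neighbourhood of the supertile $\omega(t)$, not of $t$ itself; the correct consequence (Corollary~\ref{c:classtildecotainsnbhtilesofx}) is only $T(\omega(x))\cong T(\omega(x'))$, never $T(x)\cong T(x')$. So the relation you actually prove closed is strictly smaller than $\sim_T$, and that does not yield closedness of $\sim_T$.

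The paper avoids any reformulation of $\sim_T$. It works directly with $\sim_p$-chains: it bounds their length uniformly by the number of collared tiles (a chain that repeats a collared tile type can be shortened), pads every chain in the sequence to a common fixed length $m$, and then for each of the $m{+}2$ positions uses compactness of $\Omega$ and finiteness of the set of collared tiles to pass to a subsequence along which the intermediate point, the witnessing collared tile, and the witnessing isometry at that position all stabilise. The stabilised data at each link then witnesses $\sim_p$ between consecutive limit points, yielding a $\sim_p$-chain of length $m$ for the limiting pair $(\cisom{L,x},\cisom{L',x'})$. No characterisation of $\sim_T$ beyond ``transitive closure of $\sim_p$'' is used, and in particular forcing the border plays no role in this lemma.
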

\begin{proof}
  Suppose that $\{(\cisom{L_n,x_n},\cisom{L'_n,x'_n})\}_{n\in\N}$ is a sequence in $\Omega\times \Omega$ converging to $(\cisom{L,x},\cisom{L',x'})\in \Omega\times \Omega$ such that
  $\cisom{L_n,x_n}\sim_T\cisom{L'_n,x'_n}$.
  Thus there exists a list
  \begin{scriptsize}
  $$\cisom{L_n,x_n}\sim_p \cisom{L^1_n,x^1_n}\sim_p \cisom{L^2_n,x^2_n}\sim_p\cdots \cisom{L^{m_n}_n,x^{m_n}_n}\sim_p \cisom{L'_n,x'_n}.$$
  \end{scriptsize}
  We can also assume that $m_n$ is at most the number of collared tiles because if it is greater than the number of collared tiles then we shorten the list by replacing the sublist that starts and ends on the same collared tile with just the collared tile.
  We can even assume that $m_n$ is a fixed integer $m$ by adding  $\cisom{L'_n,x'_n}\sim_p \cisom{L'_n,x'_n}$  at the end of each list $m-m_n$ times.
  Since $\Omega$ is compact, we can assume that for each $0\le k\le m+1$, $\cisom{L^k_n,x^k_n}$ converges to $\cisom{L^k,x^k}$ (by passing to a subsequence), where $L_n^0:=L_n$, $x_n^0:=x_n$, $L_n^{m+1}:=L'_n$, $x_n^{m+1}:=x_n'$.
  Thus, there exist $m+2$ sequences $x_n^k\in t_n^k\in  L^k_n$, where $0\le k\le m+1$.
  By passing to a subsequence $m+2$ times, we can assume that  for each $k$ all $t^k_n$ are the same, as we have a finite number of collared tiles.
  Since $d(\cisom{L^k_n,x^k_n},\cisom{L^k,x^k})\to 0$, for large $n$ the sequence $\phi^k_n(x^k_n)$ lives in the collared tile $t^k$ containing $x^k$ and converges to $x^k$, and $t^k$ is the same as $t^k_n$, as the maps $\phi^k_n$ coming from the distance definition are cell-preserving isometries.
  The isometry $\phi_n^{k+1}\circ\psi_n^k\circ(\phi^{k}_n)^{-1}:t^k\to t^{k+1}$ is independent of $n$ for large $n$, and so we define $\psi^k:=\phi_n^{k+1}\circ\psi_n^k\circ(\phi^{k}_n)^{-1}$. Since $\phi_n^k(x^k_n)$ converges to $x^k$ and $\psi^k$ is continuous, $\psi^k(x^k)=x^{k+1}$. Hence $\cisom{L^k,x^k}\sim_p\cisom{L^{k+1},x^{k+1}}$.
  Hence $\cisom{L,x}\sim_T\cisom{L',x'}$.
\end{proof}

\begin{lem}
  If $\cisom{L,x}\sim_p \cisom{L',x'}$ then there are two collared tiles $x\in t\in L$, $x'\in t'\in L'$ such that
  $T(\omega(t))\cong T(\omega(t'))$, where the isomorphism is cell-preserving, preserves the decorations of the collared tiles, is an isometry on each cell and maps $\omega(x)$ to $\omega(x')$.
\end{lem}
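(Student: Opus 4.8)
The plan is to unpack the definition of $\sim_p$, transport the given identification of collared tiles through the subdivision rule, and then use the forcing-its-border lemma to extend the resulting isomorphism across the collar. So first, $\cisom{L,x}\sim_p\cisom{L',x'}$ supplies collared tiles $x\in t\in L$ and $x'\in t'\in L'$ together with an isometry $\psi\colon t\to t'$ carrying the decorations of $t$ onto those of $t'$ and with $\psi(x)=x'$; these are the two collared tiles asserted by the lemma. Since $\psi$ is decoration-preserving, $t$ and $t'$ are copies of one and the same collared prototile $t_i$ from Table~\ref{t:collaredtiles} (in fact $\psi$ is the unique such isometry, since the interior decoration of a collared tile is a cyclic shift of $(1,2,3,4,5)$, which has no rotational symmetry).

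Next I would push $\psi$ through the substitution. The map $\omega$ is given by the subdivision rule of Figure~\ref{f:subdivisionmapnewdecorated}, which for collared tiles is spelled out in Figure~\ref{f:collaredsubstitution} and Definition~\ref{d:collaredsubsoncollaredtile}; it is applied independently to each collared tile, so it is natural with respect to decoration-preserving isometries of collared tiles. Hence $\psi$ induces a bijection $\Psi_0\colon\omega(t)\to\omega(t')$ between the two patches of six collared tiles that is cell-preserving, an isometry on each cell, and matches the six collared subtiles together with their decorations; and since the position of the point $\omega(x)$ inside $\omega(t)$ is obtained from the position of $x$ inside $t$ by this same (rescaled) subdivision rule, $\Psi_0$ carries $\omega(x)$ to $\omega(x')$.

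Then I would extend $\Psi_0$ across the collar. Note that $\omega(L)$ is again locally isomorphic to $K$. By the preceding lemma the collared substitution forces its border with $k=1$, so the collared tiles of $\omega(L)$ adjacent to the level-$1$ supertile $\omega(t)$ depend only on the collared type $t_i$ of $t$ --- for $t_i=t_{29}$ they are the yellow tiles of Figure~\ref{f:forcingitsbordertile29}, and similarly for the remaining $t_i$. As $t$ and $t'$ are both of type $t_i$, the collars $T(\omega(t))\setminus\omega(t)$ and $T(\omega(t'))\setminus\omega(t')$ are isomorphic collared patches, attached along the same boundary edges of $\omega(t)$, resp.\ $\omega(t')$. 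Since $\Psi_0$ is already a decoration- and edge-preserving isometry on those boundary edges, it extends uniquely to a cell-preserving, decoration-preserving, edgewise-isometric isomorphism $\Psi\colon T(\omega(t))\to T(\omega(t'))$ that still maps $\omega(x)$ to $\omega(x')$, which is what the lemma requires.

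The step I expect to be the main obstacle is this last extension. Forcing the border \emph{a priori} provides only an abstract isomorphism of collared patches, and one has to check that it can be chosen to agree with $\Psi_0$ on $\omega(t)$. This comes down to the fact already exploited in the proof of Theorem~\ref{t:collaredtilesedgesvertices}, namely that in $K$ the decoration of an edge of a pentagon, together with the decoration of that pentagon, determines the pentagon lying across the edge; consequently the collar tile across each boundary edge of $\omega(t)$ is forced \emph{as a collared tile} in a manner compatible with $\Psi_0$, so the extension $\Psi$ both exists and is unique. The naturality used in the middle step is routine, $\omega$ being a subdivision rule applied tile by tile.
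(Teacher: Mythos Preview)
Your proposal is correct and follows essentially the same approach as the paper: unpack $\sim_p$ to obtain the collared tiles and the isometry $\psi$, then invoke the $k=1$ forcing-its-border lemma to conclude that $T(\omega(t))\cong T(\omega(t'))$. The paper's proof is in fact much terser --- it simply states that forcing the border means $\omega(t)$ and $\omega(t')$ agree on their collared neighbours --- whereas you have spelled out the intermediate step of constructing $\Psi_0$ on $\omega(t)$ and then extending it across the collar, and you have been explicit about why the extension is compatible with $\Psi_0$ and still carries $\omega(x)$ to $\omega(x')$; these are exactly the details the paper leaves implicit.
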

\begin{proof}
 By definition of $\sim_p$ there are two collared tiles $x\in t\in L$ and $x'\in t'\in L'$ and an isometry $\psi:t\to t'$ that preserves the decorations from the first collared tile to the second one and $\psi(x)=x'$.
 Since the collared subdivision $\omega$ forces its border with $k=1$, $\omega(t)$ and $\omega(t')$ agree on their collared neighbors. That is
 $T(\omega(t))\cong T(\omega(t'))$.
\end{proof}

\begin{cor}\label{c:classtildecotainsnbhtilesofx}
  If $\cisom{L,x}\sim_T \cisom{L',x'}$ then $T(\omega(x))\cong T(\omega(x'))$, with $\omega(x)$ mapped to $\omega(x')$. That is,
  $\omega(L,x)$ and $\omega(L',x')$ have in common all collared tiles containing $\omega(x)$.
\end{cor}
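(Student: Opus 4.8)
The plan is to peel off the transitive closure defining $\sim_T$ and then invoke the preceding Lemma, the only extra ingredient being the observation that the patch $T(\omega(x))$ of all collared tiles through $\omega(x)$ is always contained in the collared patch $T(\omega(t))$ that the Lemma produces.

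First I would record the geometric fact behind the statement. Suppose $x$ lies in a collared tile $t$ of $L$. Then $\omega(x)\in\omega(t)$, and I claim every collared tile $C$ of the subdivided tiling $\omega(L)$ with $\omega(x)\in C$ lies in $T(\omega(t))$. Indeed, $C$ is a subtile of $\omega(u)$ for a unique collared tile $u$ of $L$; from $\omega(x)\in C\subseteq\omega(u)$ we get $x\in u$, so $u$ and $t$ meet at the point $x$, and hence $\omega(u)$ and $\omega(t)$ meet at $\omega(x)$. Since $C$ contains $\omega(x)$, the tile $C$ therefore touches $\omega(t)$, so either $C$ is one of the six tiles of $\omega(t)$ or it is a neighbor of $\omega(t)$; in both cases $C\in T(\omega(t))$. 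Thus $T(\omega(x))$ is a subpatch of $T(\omega(t))$ (and symmetrically $T(\omega(x'))$ is a subpatch of $T(\omega(t'))$ in the argument below).

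Next I would treat a single $\sim_p$ step. If $\cisom{L,x}\sim_p\cisom{L',x'}$, the preceding Lemma furnishes collared tiles $x\in t\in L$ and $x'\in t'\in L'$ together with a cell-preserving, decoration-preserving, cellwise-isometric isomorphism $\Phi\colon T(\omega(t))\to T(\omega(t'))$ with $\Phi(\omega(x))=\omega(x')$. Being such an isomorphism, $\Phi$ maps the collared tiles of $T(\omega(t))$ that contain $\omega(x)$ bijectively onto the collared tiles of $T(\omega(t'))$ that contain $\omega(x')=\Phi(\omega(x))$; by the previous paragraph these two families are exactly $T(\omega(x))$ and $T(\omega(x'))$. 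Hence $\Phi$ restricts to an isomorphism $T(\omega(x))\cong T(\omega(x'))$ of the required type sending $\omega(x)$ to $\omega(x')$.

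Finally, since $\sim_T$ is by definition the transitive closure of $\sim_p$, the hypothesis $\cisom{L,x}\sim_T\cisom{L',x'}$ is witnessed by a finite chain $\cisom{L,x}=\cisom{L^0,x^0}\sim_p\cisom{L^1,x^1}\sim_p\cdots\sim_p\cisom{L^m,x^m}=\cisom{L',x'}$. Applying the previous step to each link gives isomorphisms $T(\omega(x^{i}))\cong T(\omega(x^{i+1}))$ mapping $\omega(x^i)$ to $\omega(x^{i+1})$; composing them yields $T(\omega(x))\cong T(\omega(x'))$ with $\omega(x)\mapsto\omega(x')$, which is the claim. I expect the only delicate point to be the inclusion $T(\omega(x))\subseteq T(\omega(t))$ in the second paragraph: it uses that all parent tiles through $x$ are neighbors of $t$, that the subdivision preserves incidences, and that ``forces its border with $k=1$'' makes $T(\omega(t))$ a patch depending on $t$ alone; one should also make sure that ``neighboring collared tiles'' in the definition of $T(\cdot)$ is understood to include tiles meeting $\omega(t)$ only in a vertex, since $\omega(x)$ may be a vertex of $\omega(L)$.
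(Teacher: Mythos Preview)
Your proposal is correct and follows essentially the same route as the paper: unwind $\sim_T$ into a chain of $\sim_p$ steps, apply the preceding Lemma to each link to get $T(\omega(t_i))\cong T(\omega(t_{i+1}'))$, use the inclusion $T(\omega(x_i))\subset T(\omega(t_i))$ to restrict, and compose along the chain. The only difference is that you spell out the justification of $T(\omega(x))\subset T(\omega(t))$ (which the paper simply asserts), and you package the single $\sim_p$ step as an isomorphism $T(\omega(x))\cong T(\omega(x'))$ before composing, whereas the paper passes through the intersection $T(\omega(t_1))\cap T(\omega(t_1'))$ at each intermediate node; these are the same argument in slightly different bookkeeping.
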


\begin{proof}
 If $\cisom{L,x}\sim_T \cisom{L',x'}$ then $$\cisom{L,x}\sim_p \cisom{L_1,x_1}\sim_p \cisom{L_2,x_2}\cdots\sim_p \cisom{L_m,x_m}\sim_p\cisom{L',x'}$$
 for some integer $m$.
  But $\cisom{L,x}\sim_p \cisom{L_1,x_1}$ implies that $T(\omega(t))\cong T(\omega(t_1)) $ for some collared tiles $x\in t\in L$, $x_1\in t_1\in L_1$ and the isomorphism identifies $x$ with $x_1$. Similarly,  $\cisom{L_1,x_1}\sim_p \cisom{L_2,x_2}$ implies that $T(\omega(t_1'))\cong T(\omega(t_2))$  for some collared tiles $x_1\in t_1'\in L_1$  and $x_2\in t_2\in L_2$.
   Since $T(\omega(x_1))\subset T(\omega(t_1)) \cap T(\omega(t_1'))$, we have
  $T(\omega(x))\cong T(\omega(x_2))$.
  By induction on $m$, we get the corollary.
\end{proof}

%\begin{cor}\label{c:classtildecotainsnbhtilesofx}
%  For $\cisom{L,x}\in\Omega$, we have $[\cisom{L,x}]_{\sim_T}=Z(T(x))$.
%\end{cor}
%\begin{proof}
%If $\cisom{L',x'}\in Z(T(x))$, where $\cisom{L,x}\in\Omega$,  then $(T(x),x)\sqsubseteq(L',x')$, so $\cisom{L',x'}\sim_p\cisom{L,x}$.\\
%If $\cisom{L',x'}\in [\cisom{L,x}]_{\sim_T}$, then $T(\omega(T(x))\cong T(\omega(T(x')))$.
%In particular, $T(x)\cong T(x')$.
%\end{proof}

\begin{lem}\label{l:meaningofsimT}
Assume that $\cisom{L,x}\sim_T\cisom{\omega(L',x')}$.  We have
\begin{enumerate}
  \item If $x$ is an interior point of a collared tile then so is $\omega(x')$ and $x'$. In such case $\cisom{L,x}\sim_p \cisom{\omega(L',x')}$.
  \item If $x$ is an interior point of a collared edge or is a vertex then so is $\omega(x')$ but $x'$ might be an interior point of a tile.
\end{enumerate}
\end{lem}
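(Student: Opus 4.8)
The plan is to unwind the definition of $\sim_T$ into a finite chain of $\sim_p$-steps and then exploit the border-forcing property (Lemma on $k=1$) together with Corollary \ref{c:classtildecotainsnbhtilesofx} to control the position of the point under $\omega$. First I would record the structural fact that drives everything: since $\omega$ forces its border with $k=1$, the collared substitution of a single collared tile determines not just the tile but the collared tiles surrounding it, i.e. $T(\omega(t))$ depends only on the collared-tile type of $t$. Hence if $x$ lies in the interior of a cell $\sigma$ of a collared tile $t$ (be it the interior of the pentagon, the interior of an edge, or a vertex), then $\omega(x)$ lies in the image $\omega(\sigma)$, which is a union of lower-level cells, and the combinatorial position of $\omega(x)$ inside $T(\omega(t))$ is completely pinned down by the pair $(t,\sigma)$ and the spot of $x$ in $\sigma$.

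Next I would treat part (1). Suppose $x$ is an interior point of a collared tile. By Corollary \ref{c:classtildecotainsnbhtilesofx}, $\cisom{L,x}\sim_T\cisom{\omega(L',x')}$ gives $T(\omega(x))\cong T(\omega(\omega(x')))$ with $\omega(x)\mapsto\omega(\omega(x'))$; but the left side has $\omega(x)$ interior to a single collared tile, so $\omega(\omega(x'))$ is interior to a single collared tile as well, and since $\omega$ is a cell-preserving homeomorphism that maps interiors of cells into interiors of cells, $\omega(x')$ and then $x'$ must also be interior points of collared tiles. In that situation the isometry between the two collared tiles containing $x$ and $x'$ exists directly (read off the subdivision diagram Figure \ref{f:collaredsubstitution}: interior points never migrate to the boundary under $\omega$, so interior-to-interior identifications survive), giving $\cisom{L,x}\sim_p\cisom{\omega(L',x')}$. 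The key point to justify carefully here is that a single $\sim_p$-step — rather than a genuine transitive chain — already suffices, which follows because $x$ interior means only one collared tile contains it, so the chain cannot branch.

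For part (2), suppose $x$ is an interior point of a collared edge or a vertex. Apply Corollary \ref{c:classtildecotainsnbhtilesofx} again: $T(\omega(x))\cong T(\omega(\omega(x')))$ with $\omega(x)\mapsto\omega(\omega(x'))$. Since $\omega$ preserves the cell structure and the edge/vertex $x$ sits in lies in the $1$-skeleton, $\omega(x)$ lies in the $1$-skeleton of the subdivision, hence so does $\omega(\omega(x'))$, hence (again because $\omega$ is a cell-preserving homeomorphism) $\omega(x')$ lies in the $1$-skeleton — it is an interior point of a collared edge or a vertex. But one cannot conclude the same for $x'$ itself: the subdivision rule places new edges in the \emph{interior} of a pentagon (see Figure \ref{f:subdivisionmapnewdecorated} and Figure \ref{f:collaredsubstitution}), so a point interior to a tile can be mapped by $\omega$ onto one of these new interior edges. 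I would finish by pointing to an explicit instance in Figure \ref{f:collaredsubstitution} where an interior point of a collared tile subdivides so that it lands on an edge of the subdivision, which shows the statement "$x'$ might be an interior point of a tile" is sharp.

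The main obstacle I expect is part (1): proving that a \emph{single} $\sim_p$-step works rather than merely a $\sim_T$-chain, i.e. that the transitive closure does not genuinely enlarge the relation when the base point is interior to a collared tile. The clean way to handle this is to observe that "$x$ interior to a collared tile $t$" means $t$ is the \emph{unique} collared tile with $x\in t$, so in any $\sim_p$-chain emanating from $\cisom{L,x}$ every link is forced to use this same $t$ (up to isometry), and the chain collapses; combined with Corollary \ref{c:classtildecotainsnbhtilesofx} propagating the "interior" property backwards through $\omega$, this pins down $x'$ and $\omega(x')$ as interior points and yields the direct $\sim_p$ identification.
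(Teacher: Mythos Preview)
Your detour through Corollary \ref{c:classtildecotainsnbhtilesofx} does not work. Applying the Corollary to $\cisom{L,x}\sim_T\cisom{\omega(L',x')}$ yields $T(\omega(x))\cong T(\omega^2(x'))$, but then you assert ``the left side has $\omega(x)$ interior to a single collared tile.'' That is exactly the step that fails: knowing $x$ is interior to a tile does \emph{not} make $\omega(x)$ interior to a tile, because the subdivision introduces new edges inside the pentagon --- the very phenomenon you invoke in part (2). So $T(\omega(x))$ need not be a single tile, and nothing about $\omega^2(x')$ follows. The same objection applies to your argument for part (2). Relatedly, your justification ``$\omega$ \ldots\ maps interiors of cells into interiors of cells'' is stated in the wrong direction and is false; what is true (and what you need) is the contrapositive of ``$\omega$ sends the $1$-skeleton into the $1$-skeleton'', i.e.\ $\omega(y)$ interior $\Rightarrow$ $y$ interior.

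Your final paragraph, however, is essentially the paper's proof and is correct --- it just needs to be promoted to the main argument and decoupled from the Corollary. Write the $\sim_T$ relation as a finite chain $\cisom{L,x}=\cisom{L_0,x_0}\sim_p\cdots\sim_p\cisom{L_{m+1},x_{m+1}}=\cisom{\omega(L',x')}$, with witnessing isometries $\psi_i:t_i\to\tilde t_{i+1}$. Since each $\psi_i$ is an isometry between pentagons it preserves interior/boundary; if $x_0$ is interior in $t_0$ then $x_1=\psi_0(x_0)$ is interior in $\tilde t_1$, and because tiles do not overlap on their interiors the tile $t_1$ (used for the next step) must equal $\tilde t_1$. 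Iterating, every $x_i$ is interior, in particular $\omega(x')$ is interior, and the $\psi_i$ compose to give a single isometry $t_0\to \tilde t_{m+1}$, which is precisely $\cisom{L,x}\sim_p\cisom{\omega(L',x')}$. Finally $\omega(x')$ interior forces $x'$ interior because $\omega$ sends the $1$-skeleton into the $1$-skeleton. Part (2) is the same chain argument (each $\psi_i$ preserves ``lies on the boundary''), together with the observation from Figure \ref{f:subdivisionmapnewdecorated} that $\omega$ can place an interior point on a new edge, so no conclusion about $x'$ itself is possible. No border-forcing and no use of Corollary \ref{c:classtildecotainsnbhtilesofx} are needed here.
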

\begin{proof}
 If $\cisom{L,x}\sim_T \cisom{L'',x''}$
 then $$\cisom{L,x}\sim_p \cisom{L_1,x_1}\sim_p \cisom{L_2,x_2}\cdots\sim_p \cisom{L_m,x_m}\sim_p\cisom{L'',x''}$$
 for some integer $m$.
  By definition of $\sim_p$ there are two collared tiles $x_i\in t_i\in L_i$ and $x_{i+1}\in \tilde t_{i+1}\in L_{i+1}$ and an isometry $\psi_i:t_i\to \tilde t_{i+1}$, where $i=0,\ldots, m$ and $x_0:=x$, $x_{m+1}:=x''$, that preserves the decorations from the first collared tile to the second one and $\psi_i(x_i)=x_{i+1}$. We consider the case when $x_0$ is in the interior of a tile.

   If $x_0$ is an interior point in $t_0$ then $\psi_0(x_0)=x_1$ is an interior point in $\psi_0(t_0)=\tilde t_1$. Since tiles do not overlap on their interiors, $\tilde t_1= t_1$ and so $x_1$ is an interior point in $t_1$. By finite induction, it follows that $\tilde t_{m+1}=t_{m+1}$ and that $x_{m+1}$ is an interior point in $t_{m+1}$. In summary, if $x$ is in the interior of a tile then $\cisom{L,x}\sim_T \cisom{L'',x''}$ implies $\cisom{L,x}\sim_p \cisom{L'',x''}$.
   Since $\omega$ on a tile is a homeomorphism, if $\omega(x_1)$ is in the interior then so is $x_1$. This proves part 1.
   Part 2 follows immediately from definition of $\omega$ on a tile. See Figure \ref{f:interiorboundarypoint}.
\end{proof}
\begin{figure}
  % Requires \usepackage{graphicx}
  \centering
  \includegraphics[scale=1]{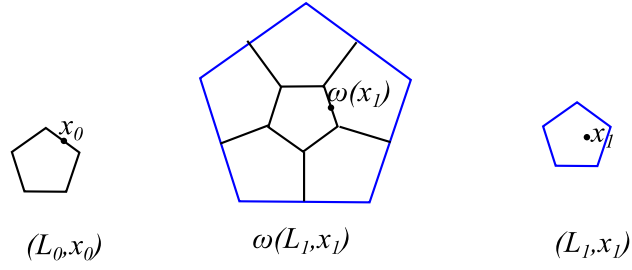}\\
  \caption{If $\cisom{L_0,x_0}\sim_T \cisom{\omega(L_1,x_1)}$ and if $x_0$ is in the boundary then so is $\omega(x_1)$ but $x_1$ might not be.}\label{f:interiorboundarypoint}
\end{figure}

\begin{lem}
  The quotient space $\Gamma:=\Omega/\sim_T$ is compact and Hausdorff.
  The map $\gamma:\Gamma\to\Gamma$  defined by$$\gamma([\cisom{L,x}]_{\sim_T}):=[\cisom{\omega(L,x)}]_{\sim_T}$$ is well-defined, continuous, and surjective.
\end{lem}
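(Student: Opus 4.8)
The plan is to treat the four claims in turn, with only the well-definedness of $\gamma$ requiring any genuine argument; everything else follows formally from compactness of $\Omega$, closedness of $\sim_T$, and Corollary~\ref{c:classtildecotainsnbhtilesofx}.

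Write $q\colon\Omega\to\Gamma$, $q(\cisom{L,x}):=[\cisom{L,x}]_{\sim_T}$, for the quotient map. Since $(\Omega,d)$ is compact and $q$ is continuous and onto, $\Gamma=q(\Omega)$ is compact. For the Hausdorff property I would invoke the standard fact that the quotient of a compact Hausdorff space by a closed equivalence relation is again (compact) Hausdorff: $(\Omega,d)$ is compact metric, hence compact Hausdorff, and by the preceding lemma $\sim_T$ is closed in $\Omega\times\Omega$, so $\Gamma$ is Hausdorff. If one wants this spelled out, it comes from the observation that closedness of $\sim_T$ together with compactness of $\Omega$ makes $q$ a closed map: writing $R\subseteq\Omega\times\Omega$ for $\sim_T$ and $\pi_1,\pi_2$ for the two projections, the saturation of a closed set $C$ equals $\pi_2(\pi_1^{-1}(C)\cap R)$, a continuous image of a compact subset of $\Omega\times\Omega$, hence closed; two distinct classes are then separated by the complements of the $q$-images of disjoint open neighbourhoods of their (disjoint, closed) preimages.

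For well-definedness of $\gamma$, suppose $\cisom{L,x}\sim_T\cisom{L',x'}$. By Corollary~\ref{c:classtildecotainsnbhtilesofx} we have $T(\omega(x))\cong T(\omega(x'))$ via an isomorphism carrying $\omega(x)$ to $\omega(x')$. Restricting this isomorphism to a single collared tile of $\omega(L)$ containing $\omega(x)$ produces a collared tile of $\omega(L')$ containing $\omega(x')$ together with a decoration-preserving isometry between the two that sends $\omega(x)$ to $\omega(x')$; this is exactly the data witnessing $\cisom{\omega(L,x)}\sim_p\cisom{\omega(L',x')}$, hence $\cisom{\omega(L,x)}\sim_T\cisom{\omega(L',x')}$. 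So $\gamma$ is well defined on $\sim_T$-classes. The one point to get right here is the passage from ``the whole patch $T(\omega(x))$ is preserved'' to ``$\sim_p$ holds after applying $\omega$'': since $\sim_p$ only asks for one common collared tile in the correct spot, restricting to a single tile of the patch suffices.

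Finally, continuity and surjectivity are formal. By construction $\gamma\circ q=q\circ\omega$; since $q$ and $\omega\colon\Omega\to\Omega$ are continuous, $q\circ\omega$ is continuous, and because $q$ is a quotient map this forces $\gamma$ to be continuous. Likewise $\omega$ is a homeomorphism of $\Omega$, in particular surjective, so $\gamma\circ q=q\circ\omega$ is onto, and surjectivity of $q$ then yields surjectivity of $\gamma$. Thus the only step that is not bookkeeping is the well-definedness, and even that rests entirely on Corollary~\ref{c:classtildecotainsnbhtilesofx}, which in turn uses that the collared substitution forces its border with $k=1$.
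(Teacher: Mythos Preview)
Your proof is correct and follows essentially the same route as the paper: compactness via the continuous image $q(\Omega)$, Hausdorffness from the closedness of $\sim_T$ in $\Omega\times\Omega$, well-definedness by restricting the isomorphism $T(\omega(x))\cong T(\omega(x'))$ from Corollary~\ref{c:classtildecotainsnbhtilesofx} to a single collared tile to witness $\sim_p$, and continuity and surjectivity from the factorisation $\gamma\circ q=q\circ\omega$ through the quotient map. The only difference is that you spell out the closed-map argument behind the Hausdorff claim, which the paper simply cites as a standard fact.
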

\begin{proof}
Since the space $\Omega$ is compact and Hausdorff and the equivalence relation $\sim_T$ is closed in $\Omega\times\Omega$, the quotient space $\Omega/\sim_T$ is Hausdorff.

Let $q:\Omega\to\Gamma$ be the quotient map.
Since $q$ is continuous and $\Omega$ is compact, $\Gamma$ is compact.

  The composition $q\circ \omega:\Omega\to\Gamma$ is continuous and surjective and it is given by $q\circ\omega(\cisom{L,x})=[\cisom{\omega(L,x)}]_{\sim_T}$.
  If $\cisom{L,x}\sim_T \cisom{L',x'}$ are equivalent, then by Corollary \ref{c:classtildecotainsnbhtilesofx} there is an isomorphism $\psi:T(\omega(x))\to T(\omega(x'))$ such that $\psi(\omega(x))=\omega(x')$ which  maps collared cells to collared cells and is isometric on each cell.
  Let $t\in T(\omega(x))$ be a collared tile containing $\omega(x)$. Then $\psi:t\to \psi(t)$ maps $\omega(x)$ to $\omega(x')$ and so
   $\cisom{\omega(L,x)}\sim_p\cisom{\omega(L',x')}$.
  We have thus shown that if $\cisom{L,x}\sim_T \cisom{L',x'}$ then $q\circ\omega(L,x)=q\circ\omega(L',x')$.
  Therefore $q\circ\omega$ descends to the quotient i.e. there exists a unique continuous map $\gamma:\Gamma\to\Gamma$ such that $q\circ\omega=\gamma\circ q$. Since the left hand side is surjective, $\gamma$ is surjective.
\end{proof}

\begin{pro}
The space $\Gamma$ has the structure of a finite CW-complex whose closed-2-cells are the 36 collared faces,
the closed-1-cells are the 45 collared edges, and the 0-cells are the 10 collared vertices.
\end{pro}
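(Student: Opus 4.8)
The plan is to build the CW-structure by hand, attaching cells of increasing dimension, and then verify that the resulting complex is homeomorphic to $\Gamma=\Omega/\sim_T$. First I would define the candidate complex $\Gamma'$: take one closed $2$-cell (a pentagon) for each of the $36$ collared tiles in Table \ref{t:collaredtiles}, one closed $1$-cell for each of the $45$ collared edges in Table \ref{t:collarededges}, and one $0$-cell for each of the $10$ collared vertices in Table \ref{t:collaredvertices}. The attaching maps are dictated by the combinatorics already recorded: each collared edge $(a,b,c,d\mid e,f,g,h)$ has two endpoints, which are the collared vertices obtained by reading off the two vertex-decorations at its ends (the data $(a,b,c,d)$ together with the degree information encoding whether $b,c$ are present); and each collared pentagon, being of the form displayed in Figure \ref{f:collaredtilenotation}, has a boundary word of five (or fewer, after identifying degenerate edges) collared edges read in clockwise order from its five exterior blocks $(f,g,h\mid i,j,k\mid l,m,n\mid o,p,q\mid r,s,t)$, with orientations (the blue $\pm$ signs in Figure \ref{f:collaredtiles1}) indicating whether the edge is traversed forwards or backwards. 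I would note that the well-definedness of these attaching maps is exactly the content of Theorem \ref{t:collaredtilesedgesvertices}: every edge of every listed pentagon is on the list of collared edges, and every endpoint of every listed edge is on the list of collared vertices.

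Next I would construct the homeomorphism $\Gamma'\to\Gamma$. There is a natural continuous surjection $\pi:\bigsqcup_{i=1}^{36} t_i\to\Omega/\!\sim_T$ sending a point $y$ in the model collared tile $t_i$ to the class $[\cisom{L,x}]_{\sim_T}$ of any $\cisom{L,x}$ with $x$ sitting in a copy of $t_i$ at the spot $y$; this is well-defined precisely because $\sim_p$ identifies exactly such pairs, and it is continuous because nearby points of $\Omega$ either lie in the same collared tile or on a shared face. The map $\pi$ factors through the quotient of $\bigsqcup t_i$ by the face-identifications that define $\Gamma'$: two boundary points get the same $\sim_T$-class iff they lie on collared edges (or vertices) that $\sim_p$ (hence $\sim_T$) identifies, and by Lemma \ref{l:meaningofsimT} boundary points are never $\sim_T$-equivalent to interior points, so no further collapsing occurs. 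This yields a continuous bijection $\bar\pi:\Gamma'\to\Gamma$; since $\Gamma'$ is compact (a finite CW-complex) and $\Gamma$ is Hausdorff, $\bar\pi$ is a homeomorphism, and it transports the CW-structure of $\Gamma'$ to $\Gamma$.

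The main obstacle I anticipate is the careful bookkeeping in the second paragraph: one must check that the face-identifications forced by $\sim_p$ on the boundaries of the $36$ model pentagons are \emph{precisely} the edge-and-vertex identifications encoded by Tables \ref{t:collarededges} and \ref{t:collaredvertices}, with compatible orientations and no accidental extra gluings. Concretely, I would verify: (i) two collared edges are $\sim_p$-identified iff they are literally the same entry of Table \ref{t:collarededges} (using the symmetries noted in the proof of Theorem \ref{t:collaredtilesedgesvertices}, namely that $(i,i',i'',j+1\mid j,j',j'',i+1)$ and $(j,j',j'',i+1\mid i,i',i'',j+1)$ denote the same edge and that $i''=0$ permits swapping $i',i''$); (ii) the two endpoints of each listed edge are among the ten listed vertices, and a vertex decoration determines its collared-vertex class uniquely; and (iii) because the collared substitution forces its border with $k=1$ (preceding Lemma), a point $x$ on a face of a collared tile has its $\sim_T$-class determined by the collared patch $T(x)$, so distinct cells of $\Gamma'$ map to distinct points of $\Gamma$. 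Once these are in place, the dimension count — $36$ two-cells, $45$ one-cells, $10$ zero-cells — and the closure-finiteness and weak-topology axioms are automatic for a finite complex, completing the proof.
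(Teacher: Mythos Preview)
Your proposal is correct and follows essentially the same approach as the paper: build the finite CW-complex $\Gamma'$ explicitly from the listed collared cells and their incidences, then exhibit a continuous bijection between $\Gamma'$ and $\Gamma$ and conclude via the compact--Hausdorff argument. The only differences are cosmetic --- you construct the map $\Gamma'\to\Gamma$ while the paper writes it as $\Gamma\to\Gamma'$, and your point~(iii) invokes the forcing-the-border property where the paper (and in fact your own argument) needs only the definition of $\sim_p$ together with the fact that a collared tile already carries the exterior decorations that determine its boundary collared edges and vertices.
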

\begin{proof}
  Let $\Gamma'$ be the CW-complex constructed as follows. Start with the 10 collared vertices.
  Since each collared edge consists of two collared vertices, we can join the 10 collared vertices according to the collared edges.
  Similarly, we join the collared edges according to the collared faces. Thus, by construction $\Gamma'$ is a finite CW-complex and so it is compact and Hausdorff.
  Locally, $\Gamma'$ looks like this. Consider the special collared tile $t_{22}$. All the neighbors of $t_{22}$ are $\omega(t_i)$, $i=1,\ldots 36$, and recall that $\omega(t_i)$ is a half-dodecahedron, with $t_{22}$ at its center. The $CW$-complex $\Gamma'$ at $t_{22}$ looks like the union $\union_{i=1}^{36} \omega(t_i)$, where all the centers $t_{22}$ are identified, and also the corresponding collared faces edges and vertices.
  For another collared tile $t_i$, $i\ne 22$, we need to find all the neighbors of $t_i$, and we call them $\mathrm{nbh}_j(t_i)$, $j=1,\ldots, n_i$ for some integer $n_i\in\N$.
  We know that there are finitely many, i.e. $n_i\in\N$, for $K$ satisfies the finite local complexity (FLC) by Theorem 1.14 in \cite{MRSdiscretehull}.  The $CW$-complex $\Gamma'$ at $t_{i}$ looks like the union $\union_{j=1}^{n_i} \mathrm{nbn}_j(t_i)$, where all the  $t_{i}$'s are identified, and also the corresponding collared faces edges and vertices.

  Let $\Gamma\to\Gamma'$ be given by $[\cisom{L,x}]_{\sim_T}\mapsto x$.
  We now show this map is well defined. Suppose that $x\in t^\circ$ is in the interior of a collared tile $t\in L$.
  By the proof of Lemma \ref{l:meaningofsimT},  $\cisom{L,x}\sim_T\cisom{L',x'}$ implies $\cisom{L,x}\sim_p\cisom{L',x'}$ and so $x'$ is contained in a copy of the same collared tile $t$. Hence the map is independent of the representative when $x$ is in the interior of a collared tile of $L$.
   Similarly, if $x\in e^\circ$ is in the interior of a collared edge $e\in L$, then  $\cisom{L,x}\sim_T\cisom{L',x'}$ implies that $x'$  is contained in a copy of the same collared edge $e$. Hence the map is independent of the representative when $x$ is in the interior of a collared edge of $L$.
   The same result holds if $x$ is a collared vertex. By construction this map is surjective and if $x=x'$ then $\cisom{L,x}\sim_T\cisom{L',x'}$, hence injective.
   %( if x is in interior of a tile/edge/vertex then same collared tile/edge/vertex)
   The map is also continuous for if $x\in t^\circ$ is in the interior of a collared tile $t\in L$, there is a small ball $B_r(x)\subset t^\circ$ and
$B_{r'}(\cisom{L,x'})\subset \{[\cisom{L,x'}]_{\sim_T}\subset \Omega\mid x'\in B_r(x)\subset t^\circ\}$ for some small $r'$ such that $B_{r'}(x')\subset  B_r(x)$, hence open (and we do the same for when $x$ is in an edge or a vertex). We remark that  $Z(B(x,n,L),x)\subset [\cisom{(L,x)}]_{\sim_T}\subset\Omega$ for any $n>2$, where $Z(B(x,n,L),x)\subset \Omega$ is the set of all the tilings containing the ball $(B(x,n,L),x)$, and the equivalence class $[\cisom{(L,x)}]_{\sim_T}$ is seen as a subset of $\Omega$.
Since $\Gamma$ is compact and $\Gamma'$ is Hausdorff, $\Gamma'$ and $\Gamma$ are homeomorphic.
% according to internet a cell complex is hausdorff.
\end{proof}

\begin{rem}
Each collared edge joins either 2,4,7 or 8 collared tiles. See Figure \ref{f:collaredfacesintermsofedges}.
Twenty-five collared tiles have 4 distinct collared vertices (one of them repeats).
So eleven collared tiles have 5 distinct collared vertices. See Figure \ref{f:cverticesoneachctile}.
Each 3-degree collared vertex joins 7 collared edges and 17 collared tiles.
Each 4-degree collared vertex  joins 11 collared edges and 14 collared tiles.
\end{rem}

\begin{figure}[htbp]
  \begin{minipage}[b]{0.5\linewidth}
    \centering
    \includegraphics[width=\linewidth]{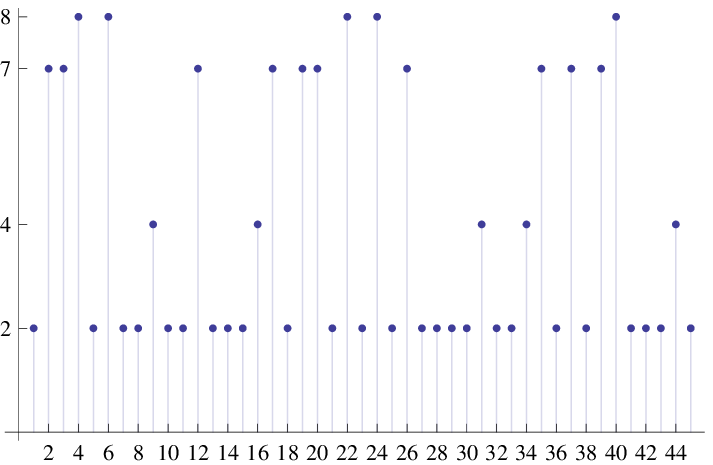}
    \caption{Number of collared tiles sharing same collared edge.}
    \label{f:collaredfacesintermsofedges}
  \end{minipage}
  \hspace{0.5cm}
  \begin{minipage}[b]{0.5\linewidth}
    \centering
    \includegraphics[width=\linewidth]{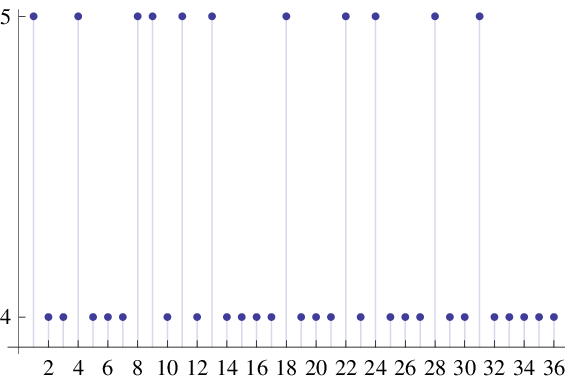}
    \caption{Number of distinct collared vertices on each collared tile.}
    \label{f:cverticesoneachctile}
  \end{minipage}
\end{figure}

\section{Inverse limits}
We define the inverse limit space from the inverse system
$$\xymatrix{\Gamma&\Gamma\ar[l]^{\gamma}&\Gamma\ar[l]^{\gamma}&\Gamma\ar[l]^{\gamma}&\ar[l]^{\gamma}\cdots}$$
as $$\Omega_1:=\{(x_0,x_1,\cdots)\in \prod\Gamma\mid x_i=\gamma(x_{i+1}), i\in\N_0\}.$$
 Observe that if we know $x_n$ then automatically we know $x_{n-1},\ldots,x_0$.
The correct notation for $\Omega_1$ should be $\lim_\leftarrow(\Gamma,\gamma)$. However, we find more convenient to write $\Omega_1$ to resemble $\Omega$ and to have a more straightforward notation.
 We equip $\prod \Gamma$ with the product topology, and $\Omega_1\subset \prod \Gamma$ with the subspace topology.
 A basis for the subspace topology is the collection of sets of the form
 \begin{eqnarray*}
  B(U,n)&:=&\pi_n^{-1}(U)\intersection \Omega_1\\
  &=&\{(\gamma^n(x_n),\ldots,\gamma(x_n),x_n,x_{n+1},\ldots)\in \Omega_1\mid x_n\in U\}\\
  &=&\{(x_0,x_1,\ldots)\in \Omega_1\mid x_i\in \gamma^{n-i}(U), i=0,\ldots,n\},
 \end{eqnarray*}
 where $U$ is an open subset of $\Gamma$,  $n\in\N_0$, and $\pi_n: \prod \Gamma\to \Gamma$ is the $n$-th projection of the product space $\prod \Gamma$ into $\Gamma$, which in the product topology is by definition continuous.

\begin{lem}
  The inverse limit space $\Omega_1$ is Hausdorff, compact, and closed in $\prod\Gamma$.
\end{lem}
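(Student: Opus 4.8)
The plan is to prove the three assertions in the order \textbf{Hausdorff}, then \textbf{closed in $\prod\Gamma$}, then \textbf{compact}, since the last follows from the first two together with Tychonoff.

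First, $\prod\Gamma$ is a product of Hausdorff spaces (each copy of $\Gamma$ is Hausdorff by the previous lemma), hence Hausdorff; and $\Omega_1\subset\prod\Gamma$ inherits the Hausdorff property as a subspace. So the Hausdorff claim is immediate and needs only one sentence.

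Next I would show $\Omega_1$ is closed in $\prod\Gamma$. Write $\Omega_1=\bigcap_{i\in\N_0}C_i$, where $C_i:=\{(x_0,x_1,\ldots)\in\prod\Gamma\mid x_i=\gamma(x_{i+1})\}$. It suffices to show each $C_i$ is closed, since an arbitrary intersection of closed sets is closed. For this, consider the two maps $\pi_i:\prod\Gamma\to\Gamma$ and $\gamma\circ\pi_{i+1}:\prod\Gamma\to\Gamma$; both are continuous ($\pi_i,\pi_{i+1}$ are continuous by definition of the product topology, and $\gamma$ is continuous by the previous lemma). Then $C_i$ is the equalizer of these two continuous maps into the Hausdorff space $\Gamma$, i.e. $C_i=(\pi_i,\gamma\circ\pi_{i+1})^{-1}(\Delta)$ where $\Delta\subset\Gamma\times\Gamma$ is the diagonal; since $\Gamma$ is Hausdorff, $\Delta$ is closed, so $C_i$ is closed. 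Hence $\Omega_1=\bigcap_i C_i$ is closed in $\prod\Gamma$.

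Finally, compactness: since each $\Gamma$ is compact (previous lemma), the product $\prod\Gamma$ is compact by Tychonoff's theorem. A closed subspace of a compact space is compact, and we have just shown $\Omega_1$ is closed in $\prod\Gamma$; therefore $\Omega_1$ is compact. I do not anticipate a genuine obstacle here — every step is a standard point-set topology fact and the only inputs are that $\Gamma$ is compact Hausdorff and $\gamma$ is continuous, all established earlier. The one small point to be careful about is invoking Tychonoff (the index set $\N_0$ is countable, so one could alternatively use the fact that a countable product of compact metrizable spaces is compact metrizable, but Tychonoff is cleanest) and making sure the equalizer-is-closed argument is phrased correctly using Hausdorffness of the target $\Gamma$ rather than of the product.
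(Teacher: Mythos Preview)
Your proof is correct and follows the same overall strategy as the paper: Hausdorffness is inherited from the product, compactness comes from Tychonoff plus closedness, and the only substantive step is showing $\Omega_1$ is closed in $\prod\Gamma$. The difference lies in that step. You argue topologically, writing $\Omega_1=\bigcap_i C_i$ with each $C_i$ the equalizer of $\pi_i$ and $\gamma\circ\pi_{i+1}$, closed as the preimage of the diagonal in the Hausdorff target $\Gamma$. The paper instead argues sequentially: it takes a sequence in $\Omega_1$ converging in $\prod\Gamma$, uses pointwise convergence together with continuity of $\gamma$ and uniqueness of limits to check the defining relations on the limit point. Your version is slightly more robust, since it does not implicitly rely on first countability of $\prod\Gamma$ (which holds here because $\Gamma$ is metrizable and the index set is countable, but is not stated); the paper's sequential argument is perhaps more concrete. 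Either way the content is the same.
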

\begin{proof}
 By Tychonoff's theorem, the infinite product $\prod\Gamma$ with the product topology is compact, as $\Gamma$ is compact.
 It is also Hausdorff because $\Gamma$ is Hausdorff. Hence $\Omega_1$ is Hausdorff.
 The inverse limit space $\Omega_1$ is a closed subset of $\prod\Gamma$. Indeed, suppose $\{\vx^{(n)}\}_{n\in\N}$ is a sequence in $\Omega_1$ and suppose that $\vx^{(n)}\to \vx\in\prod \Gamma$. We need to show that  $\gamma(x_{i+1})=x_i$ for all $i\in \N_0$, where $\vx=(x_0,x_1,\ldots)$. In the product topology, also known as the topology of pointwise convergence, $\vx^{(n)}\to \vx$  means $x^{(n)}_i\to x_i$ for all $i\in\N_0$. Since $\gamma$ is continuous, $\gamma(x^{(n)}_{i+1})\to \gamma(x_{i+1})$. Since  $\gamma(x^{(n)}_{i+1})=x_i^{(n)}\to x_i$ and $\Pi\Gamma$ is Hausdorff, the limit is unique and so  $\gamma(x_{i+1})=x_i$.
 Thus $\Omega_1$ is closed, hence compact.
\end{proof}

The map $\gamma$ induces a right shift map defined in the following lemma.
\begin{lem}
  The right shift map $\rho:\Omega_1 \to \Omega_1$ and left shift map $\rho^{-1}:\Omega_1 \to \Omega_1$ defined by
  \begin{eqnarray*}
    &&\rho(\vx):=(\gamma(x_0),\gamma(x_1),\gamma(x_2),\ldots)=(\gamma(x_0),x_0,x_1,\ldots)\\
    &&\rho^{-1}(\vy)=(y_1,y_2,\ldots)
  \end{eqnarray*}
 are continuous and inverse of each other where $\vx=(x_0,x_1,\ldots)$ and $\vy=(y_0,y_1,\ldots)$.
\end{lem}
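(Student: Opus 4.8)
The plan is to verify directly that the formulas for $\rho$ and $\rho^{-1}$ define maps into $\Omega_1$, that they are continuous, and that they compose to the identity in both orders. First I would check that $\rho$ maps $\Omega_1$ into $\Omega_1$: given $\vx=(x_0,x_1,\ldots)\in\Omega_1$, so that $x_i=\gamma(x_{i+1})$ for all $i$, the tuple $\rho(\vx)=(\gamma(x_0),\gamma(x_1),\gamma(x_2),\ldots)$ has $i$-th entry $\gamma(x_i)$, and the relation $\gamma(x_i)=\gamma(\gamma(x_{i+1}))$ is just $\gamma$ applied to $x_i=\gamma(x_{i+1})$, so $\rho(\vx)\in\Omega_1$. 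The alternative description $\rho(\vx)=(\gamma(x_0),x_0,x_1,\ldots)$ follows because $\gamma(x_{i+1})=x_i$, i.e. the entry in position $i+1$ of $\rho(\vx)$, namely $\gamma(x_{i+1})$, equals $x_i$. Similarly $\rho^{-1}(\vy)=(y_1,y_2,\ldots)$ lands in $\Omega_1$ since the defining relations $y_{i+1}=\gamma(y_{i+2})$ for $\vy\in\Omega_1$ are exactly the relations required of the shifted tuple.

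Next I would check the two compositions. For $\vx\in\Omega_1$ we have $\rho^{-1}(\rho(\vx))=\rho^{-1}(\gamma(x_0),x_0,x_1,\ldots)=(x_0,x_1,\ldots)=\vx$, using the "drop the first coordinate" description of $\rho^{-1}$ together with the second form of $\rho$. For the other order, given $\vy=(y_0,y_1,\ldots)\in\Omega_1$ we have $\rho^{-1}(\vy)=(y_1,y_2,\ldots)$, and then $\rho(y_1,y_2,\ldots)=(\gamma(y_1),y_1,y_2,\ldots)$; since $\vy\in\Omega_1$ gives $\gamma(y_1)=y_0$, this equals $(y_0,y_1,y_2,\ldots)=\vy$. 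Hence $\rho$ and $\rho^{-1}$ are mutually inverse bijections of $\Omega_1$.

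Finally I would address continuity. Since $\Omega_1$ carries the subspace topology from the product $\prod\Gamma$, a map into $\Omega_1$ is continuous iff each coordinate is continuous, i.e. iff the composite with each projection $\pi_n$ is continuous. For $\rho$, the composite $\pi_n\circ\rho$ sends $\vx$ to $\gamma(x_n)=\gamma(\pi_n(\vx))$, which is continuous as a composition of the continuous maps $\pi_n$ and $\gamma$. For $\rho^{-1}$, the composite $\pi_n\circ\rho^{-1}$ sends $\vy$ to $y_{n+1}=\pi_{n+1}(\vy)$, which is just a projection, hence continuous. Therefore both $\rho$ and $\rho^{-1}$ are continuous, and being inverse to each other they are homeomorphisms.

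I do not expect a genuine obstacle here; the only points demanding care are bookkeeping ones — keeping straight which of the two equivalent descriptions of $\rho$ is being used in each step, and invoking the correct characterization of continuity into a subspace of a product (coordinatewise continuity). Everything else is a direct substitution using the defining relation $x_i=\gamma(x_{i+1})$ of $\Omega_1$ and the continuity of $\gamma$ established earlier.
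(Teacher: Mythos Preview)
Your argument is correct and complete. The overall strategy matches the paper's: both establish continuity of $\rho$ by reducing to the continuity of $\gamma$ and the coordinate projections. The paper does this via an explicit preimage computation $\rho^{-1}(\pi_n^{-1}(U))=\pi_n^{-1}(\gamma^{-1}(U))$, while you phrase it through the universal property of the product (coordinatewise continuity); these are the same computation in different clothing. The one genuine difference is how continuity of $\rho^{-1}$ is obtained: you verify it directly by observing $\pi_n\circ\rho^{-1}=\pi_{n+1}$, whereas the paper only proves continuity of $\rho$, checks (implicitly) that $\rho$ is a bijection, and then invokes the fact that a continuous bijection from a compact space to a Hausdorff space is a homeomorphism. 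Your route is slightly more elementary in that it avoids appealing to compactness of $\Omega_1$; the paper's route is marginally shorter. Both are entirely standard and neither has any gap.
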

\begin{proof}
For $n\in\N_0$ we have
\begin{eqnarray*}
  \rho^{-1}(\pi_n^{-1}(U))&=&\{\vx\in\Omega_1\mid \rho(\vx)\in\pi^{-1}_n(U)\}\\
  &=&\{\vx\in\Omega_1\mid \pi_n(\rho(\vx))\in U\}\\
  &=&\{\vx\in\Omega_1\mid \pi_n(\gamma(x_0),\gamma(x_1),\ldots)\in U\}\\
  &=&\{\vx\in\Omega_1\mid \gamma(x_n)\in U\}\\
  &=&\{\vx\in\Omega_1\mid x_n\in \gamma^{-1}(U)\}\\
  &=&\pi_n^{-1}(\gamma^{-1}(U)).
\end{eqnarray*}
It follows that $\rho$ is continuous as $\gamma$ is continuous. Since $\Omega_1$ is compact and Hausdorff and $\rho$ is continuous and bijective, it is a homeomorphism.
\end{proof}
Notice that $\rho(\vx)$ is simply $\vx$ with the entry $\gamma(x_0)$ appended at the beginning. Also $\rho^{-1}(\vy)$ is $\vy$ with the first entry $y_0$ removed. Interesting enough,  $\rho^{-1}$ removes the first entry, but $\gamma$ remembers it and so we can recover it.

Since any homeomorphism induces a $\Z$-action, and $\rho$ is a homeomorphism, we have a dynamical system $(\Omega_1, \rho)$.

The sequence $\{[\omega^{-n}(\cisom{L,x})]_{\sim_T}\}_{n\in\N_0}$ is obviously in the inverse limit $\Omega_1$.
The following theorem shows that all the elements in the inverse limit $\Omega_1$ are actually of this form.

\begin{thm}\label{t:topologicalconjugateOmegaOmega1}
  The dynamical systems $(\Omega,\omega)$ and $(\Omega_1,\rho)$ are topological conjugate.
\end{thm}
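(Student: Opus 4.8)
The plan is to construct an explicit homeomorphism $\Phi:\Omega\to\Omega_1$ conjugating $\omega$ with $\rho$, following the Anderson--Putnam scheme as in \cite{PutnamBible95}. Define $\Phi(\cisom{L,x}):=\big(q(\cisom{L,x}),\,q(\omega^{-1}(\cisom{L,x})),\,q(\omega^{-2}(\cisom{L,x})),\ldots\big)$, where $q:\Omega\to\Gamma$ is the quotient map. First I would check this lands in $\Omega_1$: since $q\circ\omega=\gamma\circ q$, we get $\gamma\big(q(\omega^{-(n+1)}(\cisom{L,x}))\big)=q\big(\omega(\omega^{-(n+1)}(\cisom{L,x}))\big)=q(\omega^{-n}(\cisom{L,x}))$, so the coordinates are compatible. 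Continuity of $\Phi$ is immediate: $\omega$ is a homeomorphism of $\Omega$, hence so is $\omega^{-n}$, and $q$ is continuous, so each coordinate map $\cisom{L,x}\mapsto q(\omega^{-n}(\cisom{L,x}))$ is continuous; continuity into the product (and hence into $\Omega_1$ with the subspace topology) follows. The conjugacy relation is a formal check: $\Phi(\omega(\cisom{L,x}))$ has $n$-th coordinate $q(\omega^{-n}(\omega(\cisom{L,x})))=q(\omega^{-(n-1)}(\cisom{L,x}))$, which is exactly the $n$-th coordinate of $\rho(\Phi(\cisom{L,x}))=(\gamma(q(\cisom{L,x})),q(\cisom{L,x}),q(\omega^{-1}(\cisom{L,x})),\ldots)$ using $\gamma\circ q=q\circ\omega$; so $\Phi\circ\omega=\rho\circ\Phi$.

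Since $\Omega$ is compact and $\Omega_1$ is Hausdorff, it then suffices to show $\Phi$ is a bijection, and by compactness it is automatically a homeomorphism once bijective. Injectivity is where the forcing-the-border property does the work: if $\Phi(\cisom{L,x})=\Phi(\cisom{L',x'})$ then $q(\omega^{-n}(\cisom{L,x}))=q(\omega^{-n}(\cisom{L',x'}))$ for all $n$, i.e. $\omega^{-n}(\cisom{L,x})\sim_T\omega^{-n}(\cisom{L',x'})$. Applying $\omega^n$ and Corollary \ref{c:classtildecotainsnbhtilesofx} (which says $\cisom{A,a}\sim_T\cisom{B,b}$ forces $T(\omega(a))\cong T(\omega(b))$ with $\omega(a)\mapsto\omega(b)$), we learn that $\cisom{L,x}$ and $\cisom{L',x'}$ agree on the patch of all collared tiles containing $x$, resp.\ $x'$, and more: iterating, $\omega^{-n}(\cisom{L,x})\sim_T\omega^{-n}(\cisom{L',x'})$ for \emph{every} $n$ forces agreement on $\omega^n$ of a neighborhood of $\omega^{-n}(x)$, i.e.\ on larger and larger patches around $x$. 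Since the collared patch of radius $n$ around a point determines an open set in $\Omega$ (the remark about $Z(B(x,n,L),x)$ near the end of the CW-complex proposition), agreement on all scales forces $\cisom{L,x}=\cisom{L',x'}$. This is the step I expect to be the main obstacle: one must carefully track the isometries identifying the patches and verify that the growing patches around $x$ and $x'$ are genuinely identified by a single compatible isometry preserving decorations and the basepoint, so that the limit is an honest equality in $\Omega$ rather than merely agreement on each finite patch in isolation.

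For surjectivity, given $\vx=(x_0,x_1,\ldots)\in\Omega_1$ I would build a preimage by a compactness/diagonal argument. Each $x_n\in\Gamma$ lifts (non-uniquely) to some $\cisom{L_n,y_n}\in\Omega$ with $q(\cisom{L_n,y_n})=x_n$; the relation $\gamma(x_{n+1})=x_n$ says $q(\omega(\cisom{L_{n+1},y_{n+1}}))=q(\cisom{L_n,y_n})$, i.e.\ $\omega(\cisom{L_{n+1},y_{n+1}})\sim_T\cisom{L_n,y_n}$. Consider the sequence $\omega^n(\cisom{L_n,y_n})\in\Omega$; by compactness of $\Omega$ pass to a convergent subsequence with limit $\cisom{L,x}$. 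Using Corollary \ref{c:classtildecotainsnbhtilesofx} again, $\omega(\cisom{L_{n+1},y_{n+1}})\sim_T\cisom{L_n,y_n}$ gives that $\omega^{n+1}(\cisom{L_{n+1},y_{n+1}})$ and $\omega^{n+1}(\cisom{L_n,y_n})$ agree on the collared tiles around $\omega(y_n)$-image; propagating this up the tower shows the tails $\omega^m(\cisom{L_m,y_m})$ stabilize on every bounded patch, so the subsequential limit $\cisom{L,x}$ satisfies $q(\omega^{-n}(\cisom{L,x}))=x_n$ for all $n$, i.e.\ $\Phi(\cisom{L,x})=\vx$. Finally I would remark, as the paper's lead-in sentence does, that $\Phi(\cisom{L,x})=\{[\omega^{-n}(\cisom{L,x})]_{\sim_T}\}_{n}$ is precisely the sequence named before the theorem, so the theorem indeed identifies every element of $\Omega_1$ as one of that form, and $\Phi^{-1}\circ\rho\circ\Phi=\omega$ completes the topological conjugacy.
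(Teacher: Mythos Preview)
Your proposal is correct and follows essentially the same route as the paper: the same map $\Phi=\pi$ given by $n$-th coordinate $q\circ\omega^{-n}$, injectivity via Corollary~\ref{c:classtildecotainsnbhtilesofx} and growing patches, surjectivity via a compactness/limit argument (the paper gives both a direct-limit construction and, as an alternative, exactly your subsequence argument), continuity plus compact--Hausdorff to upgrade to a homeomorphism, and the same formal conjugacy check. The ``main obstacle'' you flag in injectivity is dispatched in the paper by a quantitative estimate: the identified patches $\omega^{n-1}(T(\omega^{-n+1}(z)))$ contain metric balls of radius $r\lambda^{n-1}$ with $\lambda=1/0.54>1$ (Lemma~3.8 of \cite{MRScontinuoushull}), so $d(\cisom{L,z},\cisom{L',z'})=0$ directly, without having to glue a single global isometry by hand.
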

\begin{proof}
  Define $\pi:\Omega\to \Omega_1$ by
  $$\pi(\cisom{L,z})=([\cisom{L,z}]_{\sim_T},[\omega^{-1}(\cisom{L,z})]_{\sim_T},[\omega^{-2}(\cisom{L,z})]_{\sim_T},\cdots).$$
  We start by showing that the map is injective. Suppose that $\pi(\cisom{L,z})=\pi(\cisom{L',z'})$.
  Then $[\omega^{-n}(\cisom{L,z})]_{\sim_T}=[\omega^{-n}(\cisom{L',z'})]_{\sim_T}$ for any $n\in\N_0$.
  By Corollary \ref{c:classtildecotainsnbhtilesofx} we have  that $T(\omega^{-n+1}(z))\cong T(\omega^{-n+1}(z'))$, with the homeomorphism being cell-preserving and an isometry on each cell. Let $r$  be the smallest distance from $\omega^{-n+1}(z)$ to the boundary of $T(\omega^{-n+1}(z))$.
   Since the quotient map $q$ is well defined, we get $\omega^{n-1}(T(\omega^{-n+1}(z)))\cong \omega^{n-1}(T(w^{-n+1}(z')))$ and thus $\cisom{L,z}$ and $\cisom{L',z'}$ agree on a ball of radius $r \lambda^{n-1}$, $\lambda=1/0.54$ by Lemma 3.8 in \cite{MRScontinuoushull}.
   Since this holds for any $n$ and $\lambda>1$, we have  $d(\cisom{L,z},\cisom{L',z'})=0$ and so $\cisom{L,z}=\cisom{L',z'}$.

   We will now show it is surjective.
   Let $\{[\cisom{L_n,z_n}]_{\sim_T}\}_{n\in\N_0}$ be an element in the inverse limit $\Omega_1$.
   Thus $$[\cisom{L_{n-1},z_{n-1}}]_{\sim_T}=\gamma([\cisom{L_n,z_n}]_{\sim_T})=[\omega(\cisom{L_n,z_n})]_{\sim_T}.$$
   By Corollary \ref{c:classtildecotainsnbhtilesofx} we have  $T(\omega(z_{n-1}))\cong T(\omega^2(z_n))$.
   Thus
   $$T(\omega(z_1))\cong T(\omega^2(z_2))\subset \omega(T(\omega(z_2)))\cong \omega(T(\omega^2(z_3)))\subset \omega^2(T(\omega(z_3))) \cdots.$$
   Let $$L:=\lim_{n\to\infty} \omega^{n-1}(T(\omega(z_n))), \qquad z:=\lim_{n\to\infty} \omega^n(z_n)$$
   be the direct limit. Notice that $\omega(z_1)$ is being mapped to $\omega^2(z_2)$, which in turn is mapped to $\omega^3(z_3)$, and so on.
   Since for $n\in\N$, $T(\omega(z_n))\subset \omega^{-(n-1)}(L)$, we have $$[\cisom{\omega^{-(n-1)}(L,z)}]_{\sim_T}=[\omega(\cisom{L_n,z_n})]_{\sim_T}=[\cisom{(L_{n-1},z_{n-1})}]_{\sim_T}.$$
   Hence, $\pi(\cisom{L,z})=\{[\cisom{L_n,z_n}]_{\sim_T}\}_{n\in\N_0}$, so $\pi$ is surjective.
   An alternative method to show that $\pi$ is surjective is the following, where we use compactness. Let $\{[\cisom{L_n,z_n}]_{\sim_T}\}_{n\in\N_0}$ be an element in the inverse limit $\Omega_1$. Notice that $[\cisom{L_n,z_n}]_{\sim_T}=[\cisom{\omega^{k-n}(L_{k},z_{k})}]_{\sim_T}$ for any $k,n\in\N_0$ with $k\ge n$.
   Thus informally $\pi([\cisom{\omega^{\infty}(L_\infty,z_\infty)}]_{\sim_T})=\{[\cisom{L_n,z_n}]_{\sim_T}\}_{n\in\N_0}$, so the question is to specify $\omega^{\infty}(L_\infty,z_\infty)$ in terms of the sequence. Since $\{\cisom{L_n,z_n}\}_{n\in\N_0}$ is a sequence in $\Omega$ and $\Omega$ is compact, there is a convergent subsequence $\{\cisom{L_{n_i},z_{n_i}}\}_{i\in\N}$ converging to some $\cisom{L,z}\in\Omega$.
    So we specify $\omega^{\infty}(L_\infty,z_\infty)$ as $(L,z)$.

%   So, given integers $n>m$, there is an isomorphism $\psi_{n,m}:\omega^m(T(z_m))\to \omega^n(L_n,z_n)$ mapping $\omega^m(z_m)$ to $\omega^n(z_n)$.
%   Let $\psi_n$ be the $n$-th isomorphism in the above equation where $n\in\N$ increases from left to right.
%   For fixed $n\in\N$,
%   It follows that the sequence $\{\cisom{\omega^n(L_n),\omega^n(z_n)}\}_{n\in\N_0}$ is cauchy, and hence it converges to some $\cisom{L,z}\in\Omega$, as $\Omega$ is a compact metric space.

   We will now show that $\pi$ is continuous.
   Let $B(U,n)\subset\Omega_1$ be a basis element of $\Omega_1$.  That is, $B(U,n)=\pi_n^{-1}(U)\cap \Omega_1$, $U\subset \Gamma$ open, $n\in\N_0$. We have
   \begin{eqnarray*}
     \pi^{-1}(B(U,n))&=&\{\cisom{L,z}\in\Omega\mid \pi([\cisom{L,z}]_{\sim_T})\in \pi_{n}^{-1}(U)\cap \Omega_1\}\\
&=&\{\cisom{L,z}\in\Omega\mid \pi_n(\pi([\cisom{L,z}]_{\sim_T}))\in  U\}\\
     &=&\{\cisom{L,z}\in\Omega\mid [\cisom{\omega^{-n}(L,z)}]_{\sim_T}\in U\}\\
          &=&\{\cisom{L,z}\in\Omega\mid \cisom{\omega^{-n}(L,z)}\in q^{-1}(U)\}\\
          &=&\{\cisom{L,z}\in\Omega\mid \cisom{L,z}\in \omega^{n}(q^{-1}(U))\}\\
          &=& \omega^{n}(q^{-1}(U)),
   \end{eqnarray*}
   where $q:\Omega\to\Omega/\sim_T$ is the quotient map. Since $\omega^{-1}$ and $q$ are continuous maps and  $U$ is open, $\pi^{-1}(B(U,n))$ is open. Hence $\pi$ is continuous. Since $\Omega$ is compact and $\Omega_1$ is Hausdorff and $\pi:\Omega\to\Omega_1$ is a continuous bijection, $\pi$ is a homeomorphism.
   The substitution map $\omega$ is topological conjugate to the right shift map $\rho$ because
   \begin{eqnarray*}
     \pi\circ\omega(\cisom{L,z})&=&\pi(\cisom{\omega(L,z)})\\
     &=&([\cisom{\omega^{-i}(\omega(L,z))}]_{\sim_T})_{i\in\N_0}\\
     &=&(\gamma([\cisom{\omega^{-i}(L,z)}]_{\sim_T}))_{i\in\N_0}\\
     &=&\rho(\pi(\cisom{L,z})).
   \end{eqnarray*}
\end{proof}

The above theorem enable us to compute the cohomology of the hull $\Omega$.

\subsection*{Acknowledgments.}
The results of this paper were obtained during my Ph.D. studies at University of Copenhagen. I would like to express deep gratitude to my supervisor Erik Christensen and Ian F. Putnam whose guidance and support were crucial for the successful completion of this project.

\end{document}